\theoremstyle{plain}
\newtheorem{theorem}{Theorem}[section]
\newtheorem{proposition}{Proposition}[section]
\newtheorem{lemma}{Lemma}[section]
\theoremstyle{definition}
\theoremstyle{remark}
\newtheorem{remark}{Remark}[section]
\title{Synchronization and spin-flop transitions for a mean-field XY model in random field}
\author{Francesca Collet\thanks{Dipartimento di Matematica, Alma Mater Studiorum Universit\`a di Bologna, 40126 Bologna, Italy. \emph{E-mail address:} francesca.collet@unibo.it} \and Wioletta Ruszel\thanks{Delft University of Technology, Mekelweg 4 2628 CD Delft, The Netherlands. \emph{E-mail address:} W.M.Ruszel@tudelft.nl}}
\begin{document}

\maketitle

\begin{abstract}
\noindent We characterize the phase space for the infinite volume limit of a ferromagnetic mean-field XY model in a random field pointing in one direction with two symmetric values. We determine the stationary solutions and detect possible phase transitions in the interaction strength for fixed random field intensity. We show that at low temperature magnetic ordering appears perpendicularly to the field. The latter situation corresponds to a \emph{spin-flop transition}.

\vspace{0.3cm}

\noindent {\bf Keywords:} Disordered models; Interacting particle systems; Mean-field interaction; Phase transition; Reversible Markov processes; Spin-flop transitions; XY models
\end{abstract}

\section{Introduction}
Synchronization processes are ubiquitous in nature and, in particular, are rooted in human life from the metabolic processes in our cells to the highest cognitive tasks we perform as a group of individuals. Synchronously flashing fireflies, chirping crickets, violinists playing in unison, applauding audiences, firings of neuron assemblies, pacemaker cell beats, etc. are ensembles of units able to organize spontaneously allowing order to arise starting from disordered configurations \cite{PiRoKu03,Str03}. Synchrony has attracted much interest in the last decades due to its relevance in many different contexts as biology \cite{Ban11,LiStRe97,MaM-PTsHa10}, chemistry \cite{DiBAgBaBu12}, ecology \cite{Wal13}, climatology \cite{Tat07}, sociology \cite{Lou12}, physics and engineering~\cite{ScPeFrKa,VlKoMa03,WiCoSt96}.

The fundamental features of all the mentioned examples are the presence of many objects -- each of which is oscillating in a proper sense -- and the phenomenon of mutual influence between oscillations. The attempt of modeling these complex systems therefore led to consider large families of microscopic interacting oscillators. Typically such systems are far from being (easily) tractable. The milestone, pionereed by Winfree, was to consider biological oscillators as phase oscillators, neglecting the amplitude. He discovered that a population of non-identical oscillators, coupled by all-to-all interaction terms, can exhibit a temporal analog of a phase transition producing a remarkable cooperative phenomenon \cite{Win67}. The model was subsequently refined  by Kuramoto who proposed a soluble coupled phase oscillator model, which has a simple form and undergoes a transition from an incoherent to a coherent scenario \cite{Kur75}. The original model was deterministic and its dynamics described by a system of ordinary differential equations where oscillators lie on the complete graph and are coupled through the sine of their phase difference. Since then, several variants have been extensively investigated: more general interaction function and/or network \cite{BaAl99,Dai94,SaShKu87,WaSt98}; with periodic forcing \cite{Sak02}; stochastic \cite{BeGiPa10,Gia12,Sak88} and noisy in random environment \cite{CoDaP12,GiLuPo14,Luc11}; active rotator models \cite{GiPaPePo12,JaKu14,ShKu86}; and possible combinations of these cases. It is impossible to properly account for the literature in this field, we refer to the review article \cite{ABPVRS05} for more details and further references.\\
From an application viewpoint, an interesting modification of the Kuramoto model is represented by the XY model in magnetic systems. The latter is particularly important on the two-dimensional regular lattice since it provides for a microscopic description of certain systems in solid state physics, in particular superfluid helium films \cite{KoTh73}. Further applications include superconducting films \cite{BeMoOr79}, the Coulomb gas model \cite{FrSp81}, Josephson junction arrays \cite{BeLo96} and nematic liquid crystals \cite{PaGrYu94}. \\
In this paper we consider a noisy and disordered mean-field version of the XY model. We deal with a population of $N$ planar spins, represented by angular variables lying in $[0,2\pi)$, with mean-field interaction. Each spin evolves stochastically, driven by Brownian motion, coupled with all the other particles and  subject to an external random field (quenched disorder) that accounts for effects of the environment or anisotropies in the medium where particles live. The interaction term is of the same form as Kuramoto's and therefore is of ``imitative'' type in the sense that favor particle configurations where the spins are aligned. In the limit $N \to +\infty$ this model is accurately described by an ordinary differential equation (McKean-Vlasov equation). We study the long-time behavior of this equation and obtain the full phase diagram of the stationary solutions. In particular we show that at the macroscopic level the system may undergo a transition from an incoherent to a coherent state whenever the interaction is sufficiently strong compared with the field. Indeed the cooperative behavior of the system is a result of the competition between the coupling strength and the intensity of the external field. The coupling tries to keep aligned all the elements of the population, whereas the anisotropy breaks the symmetry by imposing at each spin a privileged orientation.\\
Differently from \cite{BeGiPa10}, where the zero-field case is considered, when supercritical we do not have a continuum of coherent stationary solutions. Due to the presence of disorder, a finite number of them is selected. Namely the directions the system possibly orders are either parallel or perpendicular to the randomness and moreover only magnetic orderings orthogonal to the field are chosen by the free energy.  This transition where spins align perpendicularly to the field is called \emph{spin-flop transition} and is very special in particular on $\mathbb{Z}^2$, since Mermin-Wagner theorem forbids phase transition in $d=2$ for models with continuous symmetry \cite{MeWa66}. As far as the XY model on $\mathbb{Z}^2$ is concerned, it is worth to mention that the occurrence of a spin-flop transition has been shown in presence of an alternating external field in \cite{Enter09} and in presence of a uniaxial random field in \cite{Cra13}. \\
In comparison with the treatment in \cite{ArPe-Vi93} we consider a dichotomic quenched disorder. The results in \cite{ArPe-Vi93} are derived for an even distribution of the random field, but the techniques allow for an approximate equation of the critical curve valid only in the case of small variance field density. This requirement excludes the possibility of bi-modal distributions which indeed cover our choice.\\
For the XY model in i.i.d. dichotomic quenched disorder we are able to provide a complete and rigorous phase portrait, finding another example of spin-flop transition appearing.
\\   
 
Our paper is organized as follows. In Section~\ref{sct:definitions} we define the model and introduce the notation. In the following Section~\ref{sct:results} we present our results. Section~\ref{sct:discussion} is devoted to some conclusions and future perspectives. Finally in the Section~\ref{sc:proofs} we present the proofs.

\section{Description of the microscopic model}\label{sct:definitions}

Let $[0,2\pi)$ be the one dimensional torus. Let also $\underline{\eta}=(\eta_j)_{j=1}^N \in \{ -1,+1\}^N$ be a sequence of independent, identically distributed random variables, defined on some probability space $(\Omega, \mathcal{F}, P)$, and distributed according to $\mu$. We assume  $\mu=\frac{1}{2}(\delta_{-1}+\delta_{+1})$. \\
Given a configuration \mbox{$\underline{x}=(x_j)_{j=1}^N \in [0,2\pi)^N$} and a realization of the random environment $\underline{\eta}$, we can define the Hamiltonian $H_N(\underline{x},\underline{\eta}):  [0,2\pi)^N \times \{ -1,+1\}^N  \longrightarrow  \mathbb{R}$ as
\begin{equation}\label{HamXY:Micro}   
 H_N(\underline{x},\underline{\eta})=-\frac{\theta}{2N}\sum_{j,k=1}^N \cos(x_k-x_j) - h \sum_{j=1}^N  \eta_j  \cos x_j \,,
\end{equation}
where $x_j$ is the spin at site $j$ and $h \eta_j$, with $h >0$, is the local magnetic field associated with the same site. Let $\theta$, positive parameter,  be the coupling strength.
For given $\underline{\eta}$, the stochastic process $\underline{x}(t)=(x_j(t))_{j=1}^N$, with $t \geq 0$, is a $N$-spin system evolving as a  Markov diffusion process on $[0,2\pi)^N$, with infinitesimal generator $L_N$ acting on ${\cal{C}}^2$ functions  $f:[0,2\pi)^N \longrightarrow \mathbb{R}$ as follows:
\begin{equation}\label{InfGenXY:Micro}
L_Nf(\underline{x}) = \frac{1}{2}  \sum_{j=1}^N \frac{\partial^2 f}{\partial x_j^2} (\underline{x}) + \sum_{j=1}^N \left\{ \frac{\theta}{N} \sum_{k=1}^N \sin(x_k - x_j) - h \eta_j \sin x_j \right\} \frac{\partial f}{\partial x_j} (\underline{x}) \,.
\end{equation}
Consider the complex quantity
\begin{equation}\label{Order:Parameter}
r_{N} e^{i \Psi_N}=\frac{1}{N}\sum_{j=1}^N e^{i x_j} \, ,
\end{equation}
where $0 \leq r_N \leq 1$ gives information about the degree of alignment (magnetization) of the spins and $\Psi_N$ measures the average value they are pointing. We can reformulate the expression of the infinitesimal generator \eqref{InfGenXY:Micro} in terms of \eqref{Order:Parameter}:
\begin{equation}\label{InfGen:Micro:OrdPar}
L_N f(\underline{x}) = \frac{1}{2}  \sum_{j=1}^N \frac{\partial^2 f}{\partial x_j^2} (\underline{x}) + \sum_{j=1}^N \left\{ \theta r_N  \sin (\Psi_N - x_j) - h \eta_j \sin x_j \right\} \frac{\partial f}{\partial x_j} (\underline{x}) \, .
\end{equation}
The expressions \eqref{HamXY:Micro} and \eqref{InfGen:Micro:OrdPar} describe a system of mean field ferromagnetically coupled spins, each with its own random field and subject to diffusive dynamics. The two terms in the Hamiltonian have different effects: the first one tends to align the spins, while the second one tends to make each of them point in the direction prescribed by the magnetic field. Observe that when $\eta_j=+1$ the spin is pushed toward $0$; whereas, when $\eta_j=-1$ toward $\pi$.\\ 

For simplicity, the initial condition $\underline{x}(0)$ is such that $(x_j(0),\eta_j)_{j=1}^N$ are independent and identically distributed with law $\nu$ of the form
\begin{equation}\label{XYinitial}
\nu(dx,d\eta) = q_0(x,\eta) \mu(d\eta)dx
\end{equation}
with $\int_0^{2\pi} q_0(x,\eta) \, dx = 1$, $\mu$-almost surely. The quantity $x_j(t)$ represents the time evolution on $[0,T]$ of $j$-th spin; it is the trajectory of the single $j$-th spin in time. The space of all these paths is $\mathcal{C}[0,T]$, which is the space of the continuous function from $[0,T]$ to $[0,2\pi)$, endowed with the uniform topology.

\section{Results}\label{sct:results}

We first describe the dynamics of the process \eqref{InfGenXY:Micro}, in the limit as $N\rightarrow +\infty$, in a fixed time interval $[0,T]$. To this effect we shall derive a \emph{law of large numbers} based on a large deviation principle on the path space. Later, the possible equilibria of the limiting dynamics will be studied. 

\subsection{Limiting dynamics}

Let $(x_j [0,T])_{j=1}^N \in (\mathcal{C}[0,T])^N$ denote a path of the system in the time interval $[0,T]$, with $T$ positive and fixed. If $f: [0,2\pi) \times \{-1,+1\} \longrightarrow \mathbb{C}$, we are interested in the asymptotic (as $N \rightarrow +\infty$) behavior of \emph{empirical averages} of the form
\begin{equation}
\int fd\rho_N(t) = \frac{1}{N} \sum_{j=1}^N f(x_j(t),\eta_j)
\end{equation} 
where $(\rho_N(t))_{t\in[0,T]}$ is the \textit{flow of empirical measures}
\begin{equation}\label{empflow}
\rho_N(t) = \frac{1}{N}\sum_{j=1}^N \delta_{(x_j(t),\eta_j)}.
\end{equation}
Notice that by choosing $f(x,\eta)=e^{ix}$ we obtain the order parameter \eqref{Order:Parameter}. We may think of $\rho_N:= (\rho_N(t))_{t \in [0,T]}$ as a continuous function taking values in $\mathcal{M}_1([0,2\pi) \times \{-1,+1\})$, the space of probability measures on $[0,2\pi) \times \{-1,+1\}$ endowed with the weak convergence topology, and the related Prokhorov metric, that we denote by $d_P(\, \cdot \, , \, \cdot \, )$.\\

The first result we state concerns the dynamics of the flow of empirical measures. It is a special case of what is shown in  \cite{ArPe-Vi93,DaPdHo96,denHolla00}, so the proof is omitted. We need some more notation. For a given \mbox{$q: [0,2\pi) \times \{-1,+1\} \longrightarrow \mathbb{R}$}, we introduce the linear operator $L_q$, acting on $f: [0,2\pi) \times \{-1,+1\} \longrightarrow \mathbb{R}$ as follows:
\begin{equation}\label{Operator:MKV}
L_q f(x,\eta)= \frac{1}{2}\frac{\partial^2 f}{\partial x^2}(x,\eta) - \frac{\partial}{\partial x} \left\{ \left[ \theta r_{q} \sin(\Psi_{q} - x) - h \eta \sin x \right] f(x,\eta) \right\},
\end{equation}
where
\[
r_{q} \, e^{i\Psi_{q}} :=  \int_{\{-1,+1\}} \int_0^{2\pi} e^{ix} \, q(x,\eta) \, dx \, \mu(d\eta).
\]
Given $\underline{\eta} \in \{-1,+1\}^N$, we denote by $\mathcal{P}_N^{\underline{\eta}}$ the distribution on $(\mathcal{C}[0,T])^N$ of the Markov process with generator \eqref{InfGenXY:Micro} and initial distribution $\lambda$. We also denote by 
\[
\mathcal{P}_N\left( d \underline{x}[0,T], d\underline{\eta} \right) := \mathcal{P}_N^{\underline{\eta}} \left(d \underline{x}[0,T]\right)  \mu^{\otimes N} \left( d\underline{\eta} \right)
\]
the joint law of the process and the environment. A large deviation principle for $\mathcal{P}_N$ allows to characterize the unique limit of the sequence $\{\rho_N(\cdot)\}_{N \geq 1}$ and, in particular, makes possible to provide a Fokker-Planck equation useful to describe the time evolution of the limiting probability measure. 

\begin{theorem}
\label{thm:MKV}
The nonlinear McKean-Vlasov equation
\begin{equation}\label{MKV:Equations}
    \left\{
    \begin{array}{cccr}
        \dfrac{\partial q_t}{\partial t}  (x, \eta) & = & L_{q_t} \, q_t (x, \eta) \\
        \\
        q_0 (x,\eta) &  & \mbox{given in \eqref{XYinitial}} & \\
    \end{array}
    \right.
\end{equation}
admits a unique solution in $\mathcal{C}^1 \left[[0,T], L^1(dx \otimes \mu) \right]$, and $q_t(\cdot,\eta)$ is a probability density on $[0,2\pi)$, for $\mu$-almost every $\eta$ and every $t>0$. Moreover, for every $\varepsilon >0$ there exists $C(\varepsilon) >0 $ such that
\[
\mathcal{P}_N \left( \sup_{t \in [0,T]} d_P ( \rho_N(t), q_t) > \varepsilon \right) \leq e^{-C(\varepsilon) N}
\]
for $N$ sufficiently large, where, by abuse of notations, we identify $q_t$ with the probability $q_t(x,\eta) \mu(d\eta)dx$ on $[0,2\pi) \times \{-1,+1\}$.
\end{theorem}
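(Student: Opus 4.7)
The plan is to split the statement into three parts and handle each with a standard technique tailored to the compact state space $[0,2\pi)$ and the quenched dichotomic disorder.

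For existence and uniqueness in $\mathcal{C}^1[[0,T],L^1(dx\otimes\mu)]$, I would decouple the nonlinearity by a fixed-point argument at the level of the order parameter. Given a candidate continuous trajectory $R:[0,T]\to\{|z|\leq 1\}$, the operator $L_{q_t}$ becomes a \emph{linear} time-inhomogeneous Fokker--Planck operator $L_R f(x,\eta)=\tfrac12\partial_x^2 f-\partial_x\{[\theta\,\mathrm{Im}(R(t)e^{-ix})-h\eta\sin x]\,f\}$, and standard parabolic theory on the torus (or the probabilistic representation via a time-inhomogeneous diffusion on $[0,2\pi)$) gives a unique smooth nonnegative solution $q^R_t(x,\eta)$ of the linear Cauchy problem with initial datum $q_0$. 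Define $\Phi(R)(t):=\int_{\{-1,+1\}}\int_0^{2\pi}e^{ix}q^R_t(x,\eta)\,dx\,\mu(d\eta)$. Because the drift coefficients are bounded on the circle, a Gronwall estimate shows that $R\mapsto q^R$ is Lipschitz in the $L^1(dx\otimes\mu)$ norm, uniformly on $[0,\tau]$ for small $\tau$, which makes $\Phi$ a contraction on $\mathcal{C}([0,\tau],\mathbb{C})$ with the sup norm; since the constants are uniform in time, one iterates to cover $[0,T]$.

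The probability density property then follows for free from the linearized step: the divergence form of $L_R$ and the periodicity of $[0,2\pi)$ preserve total mass, so $\int_0^{2\pi}q_t^R(x,\eta)\,dx=\int_0^{2\pi}q_0(x,\eta)\,dx=1$ for $\mu$-a.e.\ $\eta$, while the probabilistic representation (or maximum principle) gives $q_t^R\geq 0$. Parabolic regularisation on the torus yields the required $\mathcal{C}^1$ regularity in time with values in $L^1$.

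For the exponential concentration, I would invoke the path-space large deviation principle for the joint empirical measure $\rho_N$ under $\mathcal{P}_N$ proved in \cite{DaPdHo96,denHolla00,ArPe-Vi93}: there is a good rate function $I$ on $\mathcal{C}([0,T],\mathcal{M}_1([0,2\pi)\times\{-1,+1\}))$ whose unique zero is the McKean--Vlasov trajectory $(q_t)_{t\in[0,T]}$ starting at $q_0$; uniqueness of the zero uses the first part of the theorem. For fixed $\varepsilon>0$ the set $A_\varepsilon:=\{\rho:\sup_{t\in[0,T]}d_P(\rho(t),q_t)\geq\varepsilon\}$ is closed and avoids $(q_t)_t$, so by goodness $c(\varepsilon):=\inf_{A_\varepsilon}I>0$. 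The LDP upper bound yields $\mathcal{P}_N(A_\varepsilon)\leq e^{-\frac12 c(\varepsilon)N}$ for $N$ large, which is the stated estimate with $C(\varepsilon):=c(\varepsilon)/2$.

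The main obstacle is the LDP itself in the presence of the quenched environment $\underline{\eta}$: the empirical measure lives on the enlarged space $\mathcal{C}[0,T]\times\{-1,+1\}$ and one needs the bound under the annealed law $\mathcal{P}_N=\mathcal{P}_N^{\underline{\eta}}\otimes\mu^{\otimes N}$. The clean way is to combine a Sanov principle for $\underline{\eta}$ with a Girsanov/relative-entropy computation as in \cite{DaPdHo96}, which ensures that the $\{-1,+1\}$-marginal of any measure of finite rate is $\mu$. With those references available this step is essentially bookkeeping; the genuinely model-specific work is the well-posedness in paragraph one, and even there the bounded sinusoidal coefficients make the estimates routine.
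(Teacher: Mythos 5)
Your outline is correct, and it is essentially the argument the paper relies on: the paper omits the proof of Theorem~\ref{thm:MKV} altogether, stating it as a special case of \cite{ArPe-Vi93,DaPdHo96,denHolla00}, and those references proceed exactly as you sketch (well-posedness of the McKean--Vlasov equation via a contraction/Gronwall argument with bounded sinusoidal drift, then a path-space large deviation principle for the joint empirical measure with a good rate function whose unique zero is the McKean--Vlasov trajectory, giving the exponential bound on the closed set $A_\varepsilon$). So there is nothing to object to beyond the usual caveat that identifying the unique zero of the rate function requires uniqueness at the level of the nonlinear martingale problem, which your fixed-point step also delivers.
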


In other words Theorem~\ref{thm:MKV} states that under $\mathcal{P}_N$, for every $t \in [0,T]$, the sequence of empirical measures $\left\{ \rho_N (t) \right\}_{N \geq 1}$ converges weakly, as $N\rightarrow +\infty$, to a measure $\rho_t$ admitting density $q_t(x,\eta) \mu(d\eta) dx$.

\subsection{Phase diagram for the mean field limit}

Equation \eqref{MKV:Equations} describes the infinite-volume dynamics of the system governed by generator \eqref{InfGenXY:Micro}. We are interested in the detection of $t$-stationary solutions and possible phase transitions. We recall that being $t$-stationary solution for \eqref{MKV:Equations} means to satisfy $L_{q} q \equiv 0$.\\
Next result gives a characterization of stationary solutions of \eqref{MKV:Equations}.

\begin{proposition}\label{prop:stat:sols}
Let $q:[0,2\pi) \times \{-1,+1\} \longrightarrow \mathbb{R}$, such that $q(x, \cdot)$ is measurable and $q(\cdot,\eta)$ is a probability on $[0,2\pi)$. Then $q$ is a stationary solution of \eqref{MKV:Equations} if and only if it is of the form
\begin{equation}\label{StatSol:MKV}
q(x,\eta)= [Z(\eta)]^{-1}  \cdot \exp \left\{ 2\theta r \cos (\Psi -x) + 2h \eta \cos x \right\} \,,
\end{equation}
where $Z(\eta)$ is a normalizing factor and $(r,\Psi)$ satisfies the self-consistency relation
\begin{equation}\label{Order:Parameter:Stat}
r \, e^{i \Psi} = \int_{\{-1,+1\}} \int_0^{2\pi}  e^{ix} \, q(x,\eta) \, dx \, \mu(d\eta) \, .
\end{equation}
\end{proposition}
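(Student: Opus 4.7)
The plan is to show both directions of the ``if and only if'' by reducing $L_q q = 0$ to a first-order ODE in $x$ for each fixed $\eta$, exploiting the fact that the drift in \eqref{Operator:MKV} is a gradient on the circle.

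First I would observe that, with $r = r_q$ and $\Psi = \Psi_q$, the drift coefficient $b(x,\eta) := \theta r \sin(\Psi - x) - h\eta\sin x$ is the negative gradient of the potential $V(x,\eta) = -\theta r \cos(\Psi - x) - h\eta\cos x$, i.e.\ $b = -\partial_x V$. Therefore the stationarity condition $L_q q \equiv 0$ can be rewritten as
\begin{equation*}
\frac{\partial}{\partial x}\left[\frac{1}{2}\frac{\partial q}{\partial x}(x,\eta) + \frac{\partial V}{\partial x}(x,\eta)\, q(x,\eta)\right] = 0,
\end{equation*}
so that the bracketed ``probability current'' is a function $C(\eta)$ of $\eta$ alone.

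Next I would use the periodicity inherent to the torus $[0,2\pi)$ to eliminate this constant. Multiplying the first-order equation $\frac{1}{2} q_x + V_x q = C(\eta)$ by the integrating factor $e^{2V(x,\eta)}$ gives
\begin{equation*}
\frac{\partial}{\partial x}\left[e^{2V(x,\eta)} q(x,\eta)\right] = 2 C(\eta)\, e^{2V(x,\eta)}.
\end{equation*}
Integrating from $0$ to $2\pi$ and using that both $q(\cdot,\eta)$ and $V(\cdot,\eta)$ are $2\pi$-periodic, the left-hand side vanishes, while $\int_0^{2\pi} e^{2V(x,\eta)}\,dx > 0$, forcing $C(\eta) = 0$ for $\mu$-a.e.\ $\eta$. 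The remaining homogeneous equation $q_x = -2 V_x q$ is solved explicitly by $q(x,\eta) = A(\eta)\,e^{-2V(x,\eta)}$, which is precisely \eqref{StatSol:MKV} once we identify $A(\eta) = [Z(\eta)]^{-1}$ from the requirement that $q(\cdot,\eta)$ be a probability density on $[0,2\pi)$. Finally, since $r$ and $\Psi$ entering the formula are by definition $r_q$ and $\Psi_q$, substituting the expression for $q$ into the definition of $r_q\,e^{i\Psi_q}$ yields the self-consistency relation \eqref{Order:Parameter:Stat}.

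For the converse direction, it suffices to verify by direct differentiation that any $q$ of the form \eqref{StatSol:MKV} with $(r,\Psi)$ solving \eqref{Order:Parameter:Stat} satisfies $L_q q = 0$: indeed, with this choice $r_q = r$ and $\Psi_q = \Psi$, so the drift in $L_q$ equals $-\partial_x V$, and the Boltzmann-type density $q \propto e^{-2V}$ verifies $\frac{1}{2}\partial_x q = -V_x q$, whence the current in the bracket above is identically zero. I do not expect any serious obstacle here: the argument is essentially the elementary observation that a one-dimensional diffusion with gradient drift is reversible with respect to the Gibbs measure, the only point requiring care being the use of periodicity to rule out a non-trivial constant current $C(\eta)$; the $\eta$-dependence is a harmless parameter since $\mu$ is supported on only two points.
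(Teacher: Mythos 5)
Your argument is correct and follows essentially the same route as the paper, which simply states that the stationarity condition $\frac{1}{2}q_{xx} = \partial_x\{[\theta r\sin(\Psi-x)-h\eta\sin x]\,q\}$ is solved under normalization and periodic boundary conditions; you merely make explicit the steps the paper leaves implicit (integrating once, killing the constant current $C(\eta)$ via periodicity of the Gibbs factor and of $q$, and reading off the self-consistency relation from $r_q e^{i\Psi_q}$), together with the routine converse verification.
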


There is a one-to-one correspondence between equilibrium distributions \eqref{StatSol:MKV}  and solutions of the self-consistency equation \eqref{Order:Parameter:Stat}. Therefore, our analysis reduces to the study of the self-consistency relation that corresponds to a bi-dimensional \emph{fixed point problem} of the form
\[
\begin{pmatrix}
r \\
\Psi
\end{pmatrix}
=
F
\begin{pmatrix}
r \\
\Psi
\end{pmatrix}.
\]
The quantities $r$ and $\Psi$ are the macroscopic counterpart of $r_N$ and $\Psi_N$ in \eqref{Order:Parameter} and still are indicators of global coherence. Solutions with $r=0$ are called \textit{paramagnetic}, those with $r>0$ are called \textit{ferromagnetic}. 

\begin{remark}
If $r=0$ the stationary distribution \eqref{StatSol:MKV} is given by
\begin{equation}\label{q*0}
q^{(0)}(x,\eta) := [Z(\eta)]^{-1}  \cdot \exp \left\{ 2h \eta \cos x \right\} \,,
\end{equation}
where $Z(\eta)$ is a normalizing factor.
\end{remark}

\begin{proposition}\label{prop:r=0}
The pair $(0, \Psi_0)$, with arbitrary $\Psi_0 \in [0,2\pi)$, is solution of \eqref{Order:Parameter:Stat} for all values of the parameters.
\end{proposition}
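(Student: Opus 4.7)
The plan is to simply substitute $r=0$ into the self-consistency equation \eqref{Order:Parameter:Stat} (whose solvability is equivalent to stationarity by Proposition~\ref{prop:stat:sols}) and verify that both sides vanish, regardless of the choice of $\Psi_0$. The left-hand side is trivially $0 \cdot e^{i\Psi_0} = 0$, so the real content is to show that the right-hand side vanishes when evaluated on $q^{(0)}$ from \eqref{q*0}.

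More precisely, writing $Z(\eta) = \int_0^{2\pi} \exp\{2h\eta\cos x\}\,dx$, I would compute
\[
\int_{\{-1,+1\}}\int_0^{2\pi} e^{ix}\, q^{(0)}(x,\eta)\,dx\,\mu(d\eta)
= \frac{1}{2}\left[\frac{1}{Z(+1)}\int_0^{2\pi} e^{ix}e^{2h\cos x}\,dx + \frac{1}{Z(-1)}\int_0^{2\pi} e^{ix}e^{-2h\cos x}\,dx\right].
\]
The change of variables $x \mapsto x+\pi$ (mod $2\pi$) sends $\cos x$ to $-\cos x$ and $e^{ix}$ to $-e^{ix}$. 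Applying it to the second integral gives
\[
\int_0^{2\pi} e^{ix}e^{-2h\cos x}\,dx = -\int_0^{2\pi} e^{ix}e^{2h\cos x}\,dx,
\]
and similarly one obtains $Z(+1) = Z(-1)$. Hence the two terms in the bracket cancel exactly, and the right-hand side equals zero.

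The underlying reason is the symmetry of the bimodal law $\mu = \tfrac{1}{2}(\delta_{+1}+\delta_{-1})$: the conditional stationary density for $\eta = +1$ is concentrated near $x=0$, while that for $\eta = -1$ is concentrated near $x=\pi$, and the two contributions to the order parameter are antipodal and enter with equal weight. There is no real obstacle here; the statement is essentially a check, and the only subtlety worth recording is that the phase $\Psi_0$ is genuinely undetermined because $q^{(0)}$ does not depend on it at all (this reflects the rotational degeneracy of the paramagnetic fixed point, which disappears as soon as $r>0$, where $\Psi$ will be selected by the full fixed-point equation).
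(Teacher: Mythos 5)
Your proof is correct: with $r=0$ the density in \eqref{StatSol:MKV} reduces to $q^{(0)}$, which no longer depends on $\Psi$, and the change of variables $x\mapsto x+\pi$ (together with $2\pi$-periodicity) shows $Z(+1)=Z(-1)$ and makes the $\eta=+1$ and $\eta=-1$ contributions to $\int e^{ix}q^{(0)}(x,\eta)\,dx\,\mu(d\eta)$ exactly cancel, so the self-consistency relation \eqref{Order:Parameter:Stat} holds with left-hand side $0\cdot e^{i\Psi_0}=0$ for every $\Psi_0$. The route is, however, different from the paper's: there Proposition~\ref{prop:r=0} is obtained as a byproduct of the machinery set up for Proposition~\ref{integralr}, namely the self-consistency relation is split into its real and imaginary parts, all integrals are evaluated explicitly in terms of modified Bessel functions (formula \eqref{conv:Bessel} and its companions), and one then observes that at $r=0$ both resulting scalar equations \eqref{eqRe3}--\eqref{eqIm3} are trivially satisfied, since the two Bessel ratios coincide and the terms $\pm h\cos\Psi$ cancel. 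Your argument is more elementary and self-contained — it needs no Bessel representation at all, only the antipodal symmetry of the bimodal law $\mu$ — which is a cleaner way to isolate the paramagnetic solution; the paper's heavier computation buys more, since the same Bessel reformulation simultaneously delivers the restriction $\Psi_+\in\{0,\tfrac{\pi}{2},\pi,\tfrac{3\pi}{2}\}$ and the fixed-point equations \eqref{eqBessel} for the ferromagnetic branches, which your symmetry argument alone would not give.
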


The next proposition shows that ferromagnetic solutions for \eqref{Order:Parameter:Stat} appear only with specific values of average position $\Psi$. In those cases $r$ may be implicitly characterized in terms of first kind modified Bessel functions
\[
I_v (y) = \frac{1}{2\pi} \int_0^{2 \pi} \cos (v \alpha) \exp \left\{ y \cos \alpha \right\} \, d\alpha
\]
of orders $v=0$ and $1$. Indeed,

\begin{proposition}\label{integralr}
The self-consistency relation \eqref{Order:Parameter:Stat} admits solutions $(r_+,\Psi_+)$, with $r_+>0$, if and only if 
\[
\Psi_+ \in \left\{ 0, \frac{\pi}{2}, \pi, \frac{3\pi}{2} \right\}.
\] 
Moreover, $r_+$ has to satisfy
\begin{equation}\label{eqBessel}
 r =
 \begin{cases} 
 \dfrac{1}{2} \left[ \dfrac{I_1(2(h+\theta r))}{I_0(2(h+\theta r))} - \dfrac{I_1(2 ( h-\theta r ))}{I_0(2 ( h-\theta r ))} \right] & \text{ if } \quad \Psi_+ \in \{0,\pi\} \\
 &\\
\dfrac{\theta r}{\sqrt{h^2+\theta^2r^2}} \dfrac{I_1(2\sqrt{h^2+\theta^2r^2})}{I_0(2\sqrt{h^2+\theta^2r^2 })} & \text{ if } \quad \Psi_+ \in \left\{ \dfrac{\pi}{2}, \dfrac{3\pi}{2} \right\},
\end{cases}
\end{equation}
where $I_v(\cdot)$ denotes the first kind modified Bessel function of order $v$.
\end{proposition}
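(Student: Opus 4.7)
The plan is to substitute the stationary density \eqref{StatSol:MKV} into the self-consistency relation \eqref{Order:Parameter:Stat}, reduce it to explicit Bessel-function expressions, and then analyse the resulting algebraic system. Since $\mu=\frac{1}{2}(\delta_{-1}+\delta_{+1})$, I would first unfold the $\eta$-integral as a sum over $\eta\in\{-1,+1\}$ and rewrite the exponent as
\[
2\theta r\cos(\Psi-x)+2h\eta\cos x \;=\; 2a(\eta)\cos x + 2b\sin x,
\]
with $a(\eta):=\theta r\cos\Psi+h\eta$ and $b:=\theta r\sin\Psi$. Setting $\rho(\eta):=\sqrt{a(\eta)^2+b^2}$ and $e^{i\varphi(\eta)}:=(a(\eta)+ib)/\rho(\eta)$, this equals $2\rho(\eta)\cos(x-\varphi(\eta))$, so $Z(\eta)=2\pi I_0(2\rho(\eta))$ and a shift of the integration variable yields
\[
\int_0^{2\pi} e^{ix}q(x,\eta)\,dx \;=\; \frac{a(\eta)+ib}{\rho(\eta)}\,\frac{I_1(2\rho(\eta))}{I_0(2\rho(\eta))}.
\]
The self-consistency relation thus becomes $r\,e^{i\Psi}=\frac{1}{2}\sum_{\eta=\pm1}\frac{a(\eta)+ib}{\rho(\eta)}\frac{I_1(2\rho(\eta))}{I_0(2\rho(\eta))}$.

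Next I would separate real and imaginary parts. Introducing
\[
C\;:=\;\tfrac{1}{2}\sum_{\eta=\pm1}\frac{I_1(2\rho(\eta))}{\rho(\eta)I_0(2\rho(\eta))}>0,\qquad D\;:=\;\tfrac{1}{2}\sum_{\eta=\pm1}\frac{\eta\,I_1(2\rho(\eta))}{\rho(\eta)I_0(2\rho(\eta))},
\]
the imaginary part gives $r\sin\Psi=bC=\theta r\sin\Psi\cdot C$, so for $r>0$ either $\sin\Psi=0$ or $\theta C=1$. The real part reads $r\cos\Psi=\theta r\cos\Psi\cdot C+hD$; in the second branch ($\theta C=1$) this collapses to $hD=0$, i.e. $g(\rho(+1))=g(\rho(-1))$ with $g(t):=I_1(2t)/(tI_0(2t))$. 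Since $\sin\Psi\neq 0$ forces $b\neq 0$ and hence $\rho(\pm 1)>0$, strict monotonicity of $g$ on $(0,\infty)$ (see below) gives $\rho(+1)=\rho(-1)$; because $\rho(\pm1)^2=\theta^2r^2+h^2\pm 2\theta rh\cos\Psi$, this forces $\cos\Psi=0$. Combining the two branches therefore yields exactly $\Psi_+\in\{0,\pi/2,\pi,3\pi/2\}$.

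Finally, I would substitute each value back into the self-consistency equation. For $\Psi_+\in\{0,\pi\}$ one has $b=0$ and $a(\eta)=\pm\theta r+h\eta$; using that $I_1$ is odd and $I_0$ even, both choices collapse to the first equation in \eqref{eqBessel}. For $\Psi_+\in\{\pi/2,3\pi/2\}$ one has $a(\eta)=h\eta$, so $\rho(+1)=\rho(-1)=\sqrt{h^2+\theta^2r^2}$ and the $h\eta$-summands cancel pairwise, delivering the second equation in \eqref{eqBessel}.

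The main obstacle is proving strict monotonicity of $g$. I would derive it from the Turán-type inequality $I_0(x)I_2(x)<I_1(x)^2$ combined with the recurrence $I_2(x)=I_0(x)-\tfrac{2}{x}I_1(x)$, which together yield $I_0(x)^2-I_1(x)^2<\tfrac{2}{x}I_0(x)I_1(x)$. A short computation using $I_0'=I_1$ and $I_1'=I_0-I_1/x$ shows that, after multiplication by $t^2 I_0(2t)^2$, the sign of $g'(t)$ is that of $2t[I_0(2t)^2-I_1(2t)^2]-2I_0(2t)I_1(2t)$, which the above inequality makes strictly negative for $t>0$.
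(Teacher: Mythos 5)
Your argument is correct and reaches the same conclusion, but the route differs from the paper's in one substantive place: the evaluation of the Bessel integrals. The paper expands the exponential in a power series and needs a combinatorial generating-function identity (Lemma~\ref{lmm:sum} in Appendix~\ref{app:integral}) to resum the result into $I_0$ and $I_1$; you instead write the exponent as $2\rho(\eta)\cos\bigl(x-\varphi(\eta)\bigr)$ and shift the integration variable, which gives $Z(\eta)=2\pi I_0(2\rho(\eta))$ and the first-moment formula in two lines -- a genuinely more elementary derivation of the same formulas, exploiting periodicity exactly where it helps. Your treatment of the self-consistency relation is also organized slightly differently: you take real and imaginary parts of $r e^{i\Psi}$ directly, so the dichotomy comes out as ``$\sin\Psi=0$ or $\theta C=1$'' with the real part then forcing $hD=0$, whereas the paper projects onto $\cos(x-\Psi)$ and $\sin(x-\Psi)$ and obtains the factorized condition $h\sin\Psi\,[\,g(\rho^2(+1))-g(\rho^2(-1))\,]=0$ at once; the two decompositions are equivalent, and in both cases the key ingredient is strict monotonicity of $g$, which pins $\rho(+1)=\rho(-1)$ and hence $\cos\Psi=0$ (or $r=0$, recovering Proposition~\ref{prop:r=0}). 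Your proof of that monotonicity, via the Tur\'an-type inequality $I_1^2>I_0I_2$ together with $I_0'=I_1$ and $I_1'(y)=I_0(y)-I_1(y)/y$, is essentially the paper's Lemma~\ref{tech:lemma}, which uses the recurrence \eqref{rec:rel:Bessel} and the inequality \eqref{ineq:Bessel} with $v=n=1$; no new input there. The substitution of the four admissible values of $\Psi$ back into the equations, using parity of $I_0,I_1$ for $\Psi\in\{0,\pi\}$ and the cancellation of the $h\eta$ terms for $\Psi\in\{\pi/2,3\pi/2\}$, matches the paper's reduction to \eqref{eqBessel}. In short: same skeleton and same monotonicity lemma, but your phase-shift computation replaces, and simplifies, the paper's series-resummation appendix.
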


We will state now our main theorem. It is concerned with the investigation of under what conditions ferromagnetic solutions for \eqref{Order:Parameter:Stat} may occur. In particular, it is shown that the system undergoes several phase transitions depending on the parameters.

\begin{theorem}\label{thm:ph:dia}
Set
\[
\theta_1 (h) := \frac{1}{2} \left[ \int_0^{2\pi} \sin^2 (x) q^{(0)} (x,+1) dx \right]^{-1} \,,
\]
\[
\theta_2 (h) := \frac{1}{2} \left[ \int_0^{2\pi} \cos^2 (x) q^{(0)} (x,+1) \, dx - \left( \int_0^{2\pi}\cos(x) q^{(0)} (x,+1) \, dx \right)^2 \right]^{-1}
\]
and, moreover, let $\bar{h}$ be the value of $h$ such that
\begin{equation}\label{h:bar}
\frac{d^3}{dr^3} \left[ \dfrac{I_1(2(h+\theta r))}{I_0(2(h+\theta r))} - \dfrac{I_1(2 ( h-\theta r ))}{I_0(2 ( h-\theta r ))} \right] \Bigg\vert_{r=0}   = 0 \,.
\end{equation}
Then,
\begin{itemize}
\item[$\bullet$]
for $\theta \leq \theta_1 (h)$, then there is no ferromagnetic solution;\\
\item[$\bullet$]
for $h \leq \bar{h}$ and $\theta_1 (h) < \theta \leq \theta_2 (h)$, there exist ferromagnetic solutions $\left( r_+,\frac{\pi}{2} \right)$ and $\left( r_+, \frac{3\pi}{2} \right)$;\\
\item[$\bullet$]
for $h > \bar{h}$ and $\theta_1 (h) < \theta < \theta_2 (h)$, there exists a further value $\theta_\star (h)$, with $\theta_1 (h) < \theta_\star (h) < \theta_2 (h)$, such that
\begin{itemize}
\item
if $\theta_1 (h) < \theta \leq \theta_\star (h)$, there exist ferromagnetic solutions $\left( r_+,\frac{\pi}{2} \right)$ and $\left( r_+, \frac{3\pi}{2} \right)$;
\item
if $\theta_\star (h) < \theta \leq \theta_2 (h)$, in addition to $\left( r_+,\frac{\pi}{2} \right)$ and $\left( r_+, \frac{3\pi}{2} \right)$, two further pairs of ferromagnetic solutions arise: $\left( \bar{r}_+,0 \right)$, $\left( \bar{r}_+,\pi \right)$ and $\left( \hat{r}_+,0 \right)$, $\left( \hat{r}_+,\pi \right)$, with $\bar{r}_+ \neq \hat{r}_+$.\\
\end{itemize}
\item[$\bullet$]
for $\theta > \theta_2 (h)$, there exist ferromagnetic solutions $\left( r_+, \frac{\pi}{2} \right)$, $\left( r_+,\frac{3\pi}{2} \right)$ and $\left( \bar{r}_+,0 \right)$, $\left( \bar{r}_+,\pi \right)$. 
\end{itemize}
The values $r_+$, $\bar{r}_+$ and $\hat{r}_+$ depend on the parameters $\theta$ and $h$; therefore, they vary according to the phase we are considering.
\end{theorem}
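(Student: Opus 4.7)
The plan is to study the two real equations in \eqref{eqBessel} separately, writing $f(y):=I_1(y)/I_0(y)$ (extended to $\mathbb{R}$ as an odd function, since $I_1$ is odd and $I_0$ is even). Standard Bessel identities yield $f'(y)=1-f(y)/y-f(y)^2$. For the \emph{perpendicular branch} $\Psi_+\in\{\pi/2,3\pi/2\}$, dividing the second line of \eqref{eqBessel} by $r>0$ and setting $u:=\sqrt{h^2+\theta^2 r^2}\in[h,\infty)$ reduces the problem to the scalar equation $\theta f(2u)/u=1$; each root $u>h$ provides $r_+=\sqrt{u^2-h^2}/\theta$. The map $u\mapsto f(2u)/u$ is strictly decreasing on $(0,\infty)$ with limit $0$ at infinity, so this equation has a unique solution if and only if $\theta f(2h)/h>1$. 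A direct Bessel evaluation of the integral defining $\theta_1(h)$ yields $\theta_1(h)=h/f(2h)$, which identifies the critical threshold and gives uniqueness of the perpendicular ferromagnetic branch for $\theta>\theta_1(h)$.

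For the \emph{parallel branch} $\Psi_+\in\{0,\pi\}$, define $g(r):=\tfrac{1}{2}[f(2(h+\theta r))-f(2(h-\theta r))]-r$. It is smooth and odd, with $g(0)=0$ and $g(r)\to-\infty$ as $r\to+\infty$ (using $f(\pm\infty)=\pm 1$). The odd Taylor coefficients at the origin are
\[
g'(0)=2\theta f'(2h)-1=\frac{\theta}{\theta_2(h)}-1,\qquad g'''(0)=8\theta^3 f'''(2h);
\]
the first identification comes from $f'(2h)=\mathrm{Var}_{q^{(0)}(\cdot,+1)}(\cos x)$, which follows from the same Bessel recurrences. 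Hence the linear part of $g$ vanishes exactly at $\theta_2(h)$ and $\operatorname{sgn} g'''(0)=\operatorname{sgn} f'''(2h)$. A monotonicity study shows that $h\mapsto f'''(2h)$ changes sign exactly once on $(0,\infty)$; the corresponding value is $\bar h$, characterized by \eqref{h:bar}, with $f'''(2h)<0$ for $h<\bar h$ (supercritical pitchfork at $\theta_2$) and $f'''(2h)>0$ for $h>\bar h$ (subcritical pitchfork).

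The next step is to promote this local picture to a global count of positive roots of $g$, using $g(+\infty)=-\infty$ and careful control of the zeros of $g''$ on $(0,\infty)$ to rule out spurious oscillations. The outcome is: in the supercritical regime $h\leq\bar h$, a unique positive root $\bar r_+$ appears precisely when $g'(0)>0$, i.e.\ $\theta>\theta_2(h)$; in the subcritical regime $h>\bar h$, there is a saddle-node value $\theta_\star(h)\in(\theta_1(h),\theta_2(h))$ at which the maximum of $g$ over $(0,\infty)$ touches zero, so that for $\theta_\star<\theta<\theta_2$ two positive roots $\hat r_+<\bar r_+$ coexist, at $\theta=\theta_2$ the smaller one merges with $r=0$, and for $\theta>\theta_2$ only $\bar r_+$ persists. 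Combined with the inequality $\theta_1(h)\leq\theta_2(h)$ (equality only at $h=0$, as seen by Taylor expansion) and the strict inequality $\theta_\star(h)>\theta_1(h)$ in the subcritical regime, this excludes any ferromagnetic solution for $\theta\leq\theta_1(h)$; assembling with the perpendicular analysis gives the four cases of the theorem.

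The main obstacle will be the global counting of positive roots of $g$: the Taylor expansion at $r=0$ only captures the small solutions near the critical line $\theta=\theta_2(h)$, and to exclude extra zeros of $g$ on $(0,\infty)$ and pin down $\theta_\star(h)$ unambiguously one needs sharp monotonicity and log-concavity estimates for Bessel ratios. The same machinery underlies the uniqueness of the sign change of $f'''$ defining $\bar h$ and the ordering $\theta_1<\theta_\star<\theta_2$ in the subcritical regime.
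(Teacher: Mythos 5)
Your overall route is the same as the paper's: reduce the perpendicular branch to the strictly decreasing scalar map (via Lemma~\ref{tech:lemma}), identify $\theta_1(h)=h\,I_0(2h)/I_1(2h)$, and treat the parallel branch as a fixed-point problem whose linear and cubic Taylor coefficients at $r=0$ produce $\theta_2(h)$ and $\bar h$, with $\theta_\star(h)$ defined by a tangency (saddle-node) condition; your computations $g'(0)=2\theta f'(2h)-1=\theta/\theta_2(h)-1$ and $g'''(0)=8\theta^3 f'''(2h)$ are correct and match the paper's $F_2$-expansion and the definition \eqref{h:bar}. The perpendicular analysis is complete and correct.

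The genuine gap is exactly where you flag "the main obstacle": the global count of positive zeros of $g$ (equivalently, of fixed points of $F_2$) is asserted, not proved. Your case analysis (none/two roots below $\theta_2$ in the subcritical regime, exactly one above $\theta_2$, none below $\theta_2$ in the supercritical regime) requires that $g''$ vanish at most once on $(0,+\infty)$, i.e.\ that the fixed-point map changes curvature at most once; likewise the claim that $h\mapsto f'''(2h)$ changes sign exactly once (which defines $\bar h$) and the ordering $\theta_1(h)<\theta_\star(h)<\theta_2(h)$ are stated without argument, deferred to unspecified "sharp monotonicity and log-concavity estimates for Bessel ratios." None of these are routine: the paper itself could not establish the single-curvature-change property analytically -- its $F_2''$ is a third central moment whose sign genuinely depends on the parameters -- and explicitly resorts to numerical verification (a footnote marks this as the only numerically assisted step), with $\theta_\star$ obtained numerically from the tangency condition $F_2(r)=r$, $F_2'(r)=1$ and $\bar h$ located as the unique zero of the explicit Bessel expression $K(h)$ by plotting. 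So as written your proposal claims, in passing, to prove more than the paper does, while supplying less; to close it you must either produce the Tur\'an-type/log-concavity inequalities controlling the number of inflections of $r\mapsto \tfrac12\left[f(2(h+\theta r))-f(2(h-\theta r))\right]$ and the sign change of $f'''$, or state the numerical assistance explicitly as the paper does. Two smaller points: the strict inequality $\theta_1(h)<\theta_2(h)$ for $h>0$ is not "seen by Taylor expansion" -- the paper derives it from the Tur\'an inequality $I_1^2(2h)-I_0(2h)I_2(2h)>0$ of \cite{JoBi91} -- and your bifurcation bookkeeping at the boundary $\theta=\theta_2$ (the smaller root merging with $r=0$) should be stated carefully so as to match the case split in the theorem.
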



The rich scenario depicted in Theorem~\ref{thm:ph:dia} can be qualitatively summarized in the phase portrait presented in Fig.~\ref{fig:ph:dia}.

\begin{figure}[h!]
\centering
\includegraphics[scale=1]{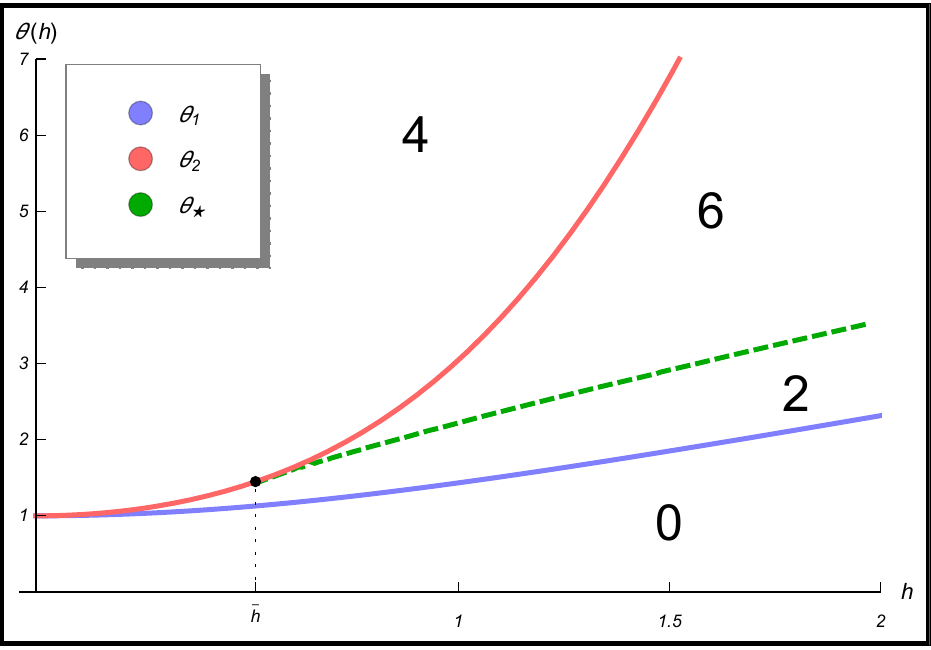}
\caption{Picture of the phase space $(h,\theta)$. Each region represents a phase with as many ferromagnetic solutions of \eqref{Order:Parameter:Stat} as indicated by the numerical label. The value $\bar{h}$ is implicitly defined by \eqref{h:bar}. The lower blue curve is $\theta_1$, whereas the upper red one corresponds to $\theta_2$. The dashed green separation line is $\theta_\star$ and is obtained numerically. Indeed, the latter is defined by a tangency relation. More hints about this curve will be given in the proof of Theorem~\ref{thm:ph:dia} in Section~\ref{sc:proofs}.
}
\label{fig:ph:dia}
\end{figure}

\section{Discussion and future perspectives}\label{sct:discussion}

The paper is concerned with the study of the phase diagram for a mean-field planar XY  model with external field. We first determined the $N \to + \infty$ limiting distribution on the path space by deriving an appropriate law of large numbers based on a large deviation principle. Then we analyzed the long-time behavior of the system. Indeed we were able to characterize the equilibria as solutions of a fixed point equation and in turn to detect several phase transitions.\\ 
Two basic mechanisms play a relevant role. On the one hand, a ferromagnetic-type interaction tends to align the spins by decreasing their phase difference. On the other, the presence of a magnetic field tends to separate the population in two different groups, pointing in opposite orientations. Clearly there is a competition between interaction and field intensity. In particular, above a critical threshold for the interaction strength a phase transition from a paramagnetic to a ferromagnetic state occurs.  More precisely we have shown that, whenever the interaction is sufficiently strong, a net magnetization appears spontaneously with average spin displacement either perpendicular or parallel to the direction fixed by the field. We then obtained $2$, $4$ or $6$ different possible ferromagnetic solutions. The richness of the phase diagram is due to the addition of a dichotomic random environment. 

\subsection{Simulations, free energy and stability}

If we integrate the model numerically, how does $\left( r_N(t), \Psi_N(t) \right)$ evolve? For concreteness, suppose we fix the field intensity $h$ and vary the coupling $\theta$. Simulations show that for all $\theta$ less than the threshold $\theta_1$, the spins act as if they were uncoupled: their values become  distributed around $0$~or~$\pi$, as prescribed by the magnetic field, starting from any initial condition (see Fig.~\ref{fig:hist:1}). 

\begin{figure}[h!]
\centering
\subfigure{\includegraphics[width=.3\textwidth]{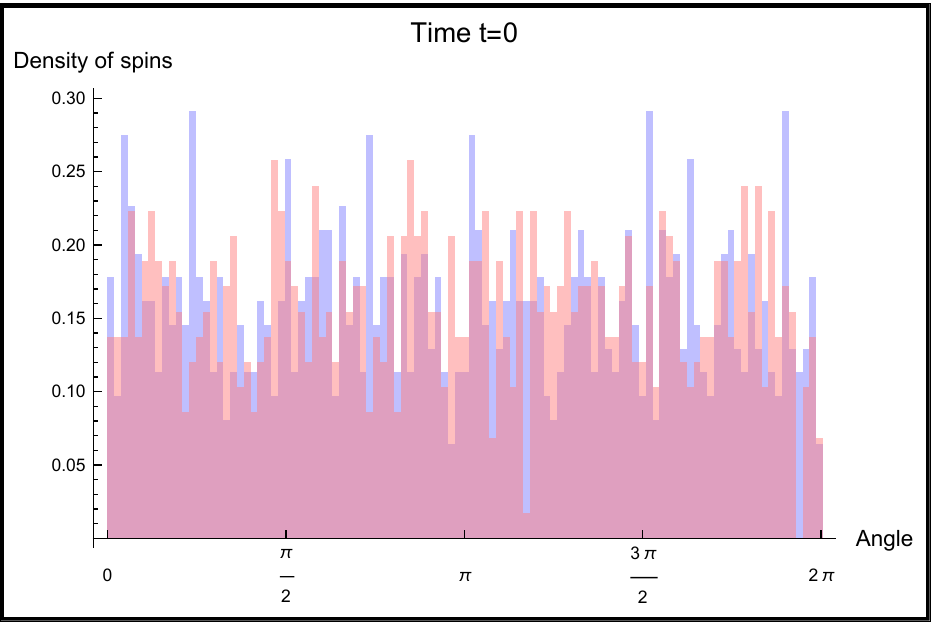}}
\subfigure{\includegraphics[width=.3\textwidth]{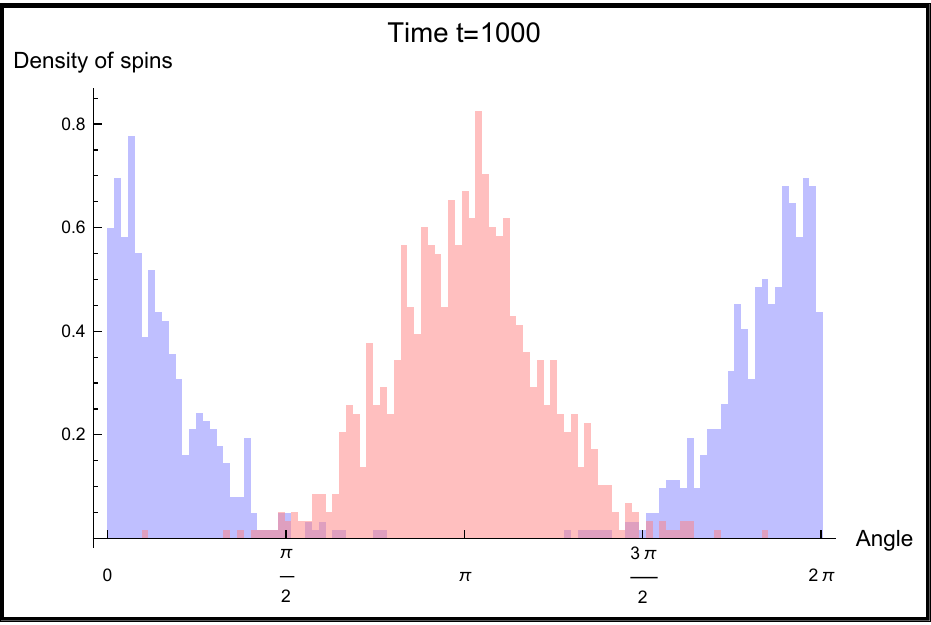}}
\subfigure{\includegraphics[width=.3\textwidth]{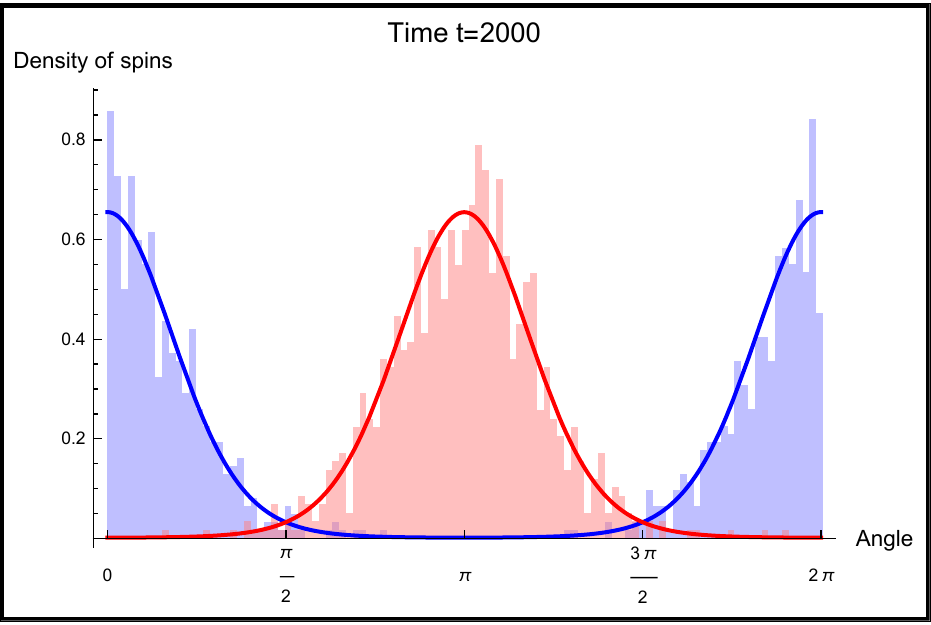}}
\caption{The picture shows the time-evolution of interacting particle system \eqref{InfGenXY:Micro} with $N=2000$ spins in the case when $h=1.5$ and $\theta=0.5$ (phase \textsf{0} in Fig.~\ref{fig:ph:dia}). Simulations have been run for $2000$ iterations, with time-step $dt=0.01$, and starting from an initial uniform random configuration on $[0,2\pi)^{N}$. The three snapshot histograms correspond to the configurations of the system at times $t=0$, $1000$ and $2000$ respectively. In each panel, on the $x$-axis we have the interval $[0,2\pi)$ and on the $y$-axis the normalized number of spins lying on the same angular position is registered; in blue (resp. red) spins subject to positive (resp. negative) field are displayed. We can see that in the long-run the spins tend to align with the site-dependent magnetic fields. As a result, approximately half of the particles are around angle $0$ and the other half around $\pi$, giving a null total magnetization; more precisely, we get $r_N (t_{fin}) = 0.001$. In the last snapshot the limiting distributions for the two families of spins are superposed; the solid blue (resp. red) line is $q(x,+1)$ (resp. $q(x,-1)$) defined by \eqref{StatSol:MKV}.
}
\label{fig:hist:1}
\end{figure}

But when $\theta$ exceeds $\theta_1$, this paramagnetic state seems to lose stability and $r_N(t)$ grows, reflecting the formation of a small cluster of spins that are aligned, thereby generating a collective phenomenon. Eventually $r_N(t)$ saturates at some level $0 < r_N(\infty) < 1$ (see Fig.~\ref{fig:hist:2}). With further increases in $\theta$, more and more spins are recruited into the ``aligned cluster'' and $r_N(\infty)$ grows as shown in Fig.~\ref{fig:theta:r}.

\begin{figure}[h!]
\centering
\subfigure[]{
\includegraphics[width=.3\textwidth]{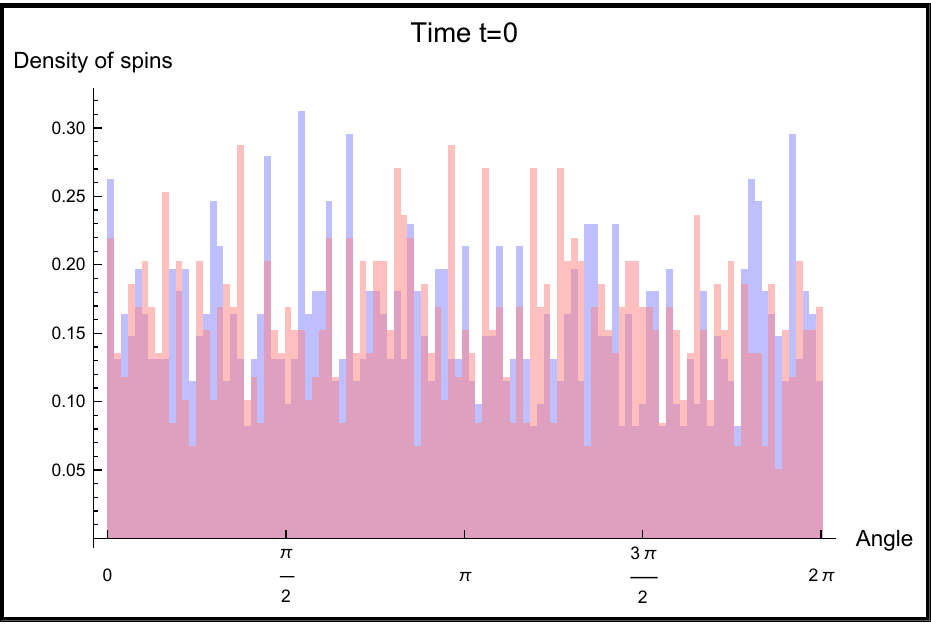}
\includegraphics[width=.3\textwidth]{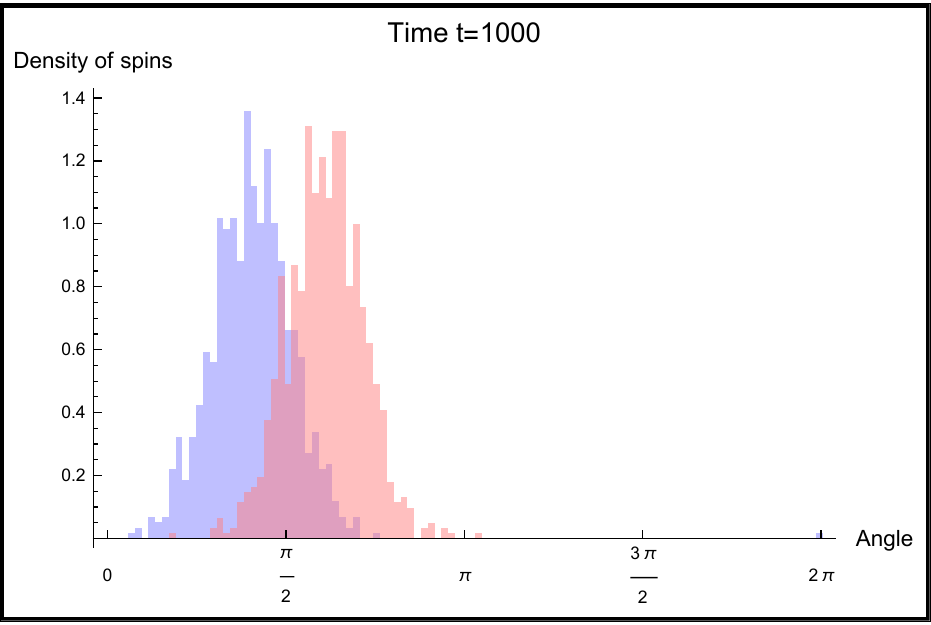}
\includegraphics[width=.3\textwidth]{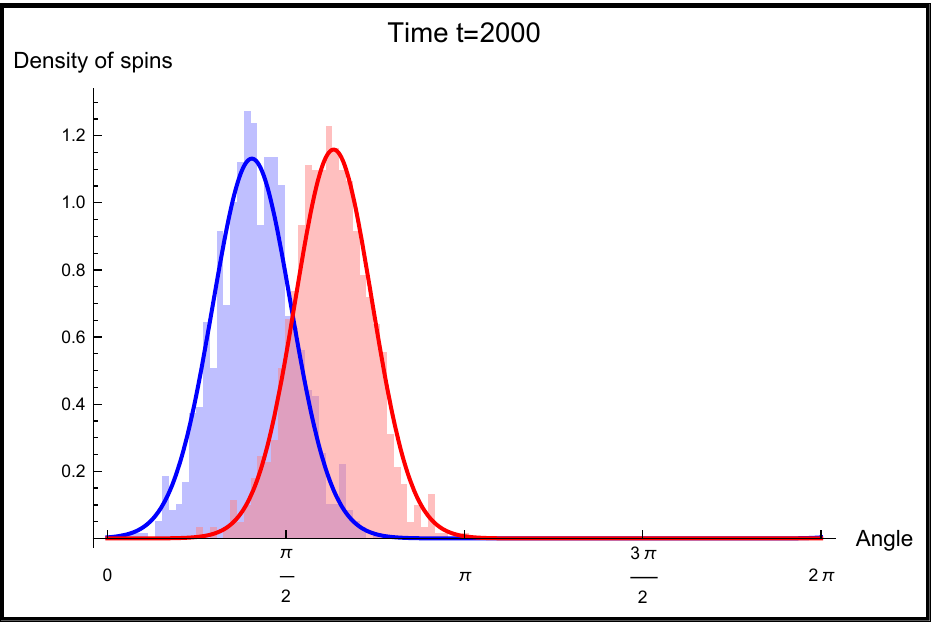}
\label{fig:sup:ev}
}
\subfigure[]{
\includegraphics[width=.5\textwidth,height=3cm]{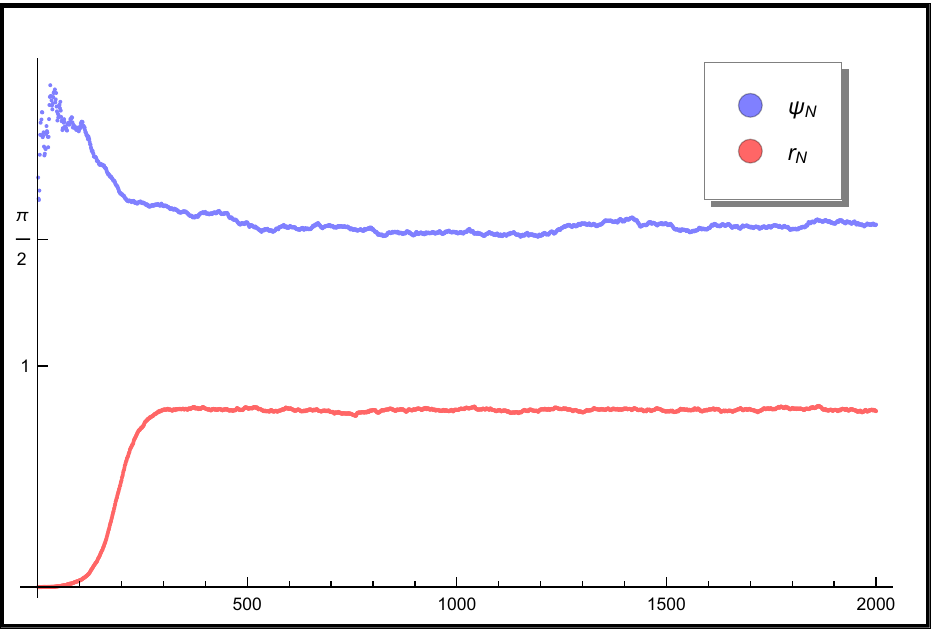}
\label{fig:r:Psi}
}
\caption{The picture shows the time-evolution of interacting particle system \eqref{InfGenXY:Micro} with $N=2000$ spins in the case when $h=1.5$ and $\theta=5$ (phase \textsf{6} in Fig.~\ref{fig:ph:dia}). Simulations have been run for $2000$ iterations, with time-step \mbox{$dt=0.01$}, and starting from an initial uniform random configuration on $[0,2\pi)^{N}$.  (a) The three snapshot histograms correspond to the configurations of the system at times $t=0$, $1000$ and $2000$ respectively.  In each panel, on the $x$-axis we have the interval $[0,2\pi)$ and on the $y$-axis the normalized number of spins lying on the same angular position is registered; in blue (resp. red) spins subject to positive (resp. negative) field are displayed. We can see that in the long-run the spins point angles close to $\frac{\pi}{2}$. As a result, a spontaneous net magnetization appears; indeed, we have $r_N (t_{fin}) = 0.8$ and the mean orientation of the particles is $\Psi_N (t_{fin}) = 1.64$. In the last snapshot the limiting distributions for the two families of spins are superposed; the solid blue (resp. red) line is $q(x,+1)$ (resp. $q(x,-1)$) defined by \eqref{StatSol:MKV}. (b) The time-evolutions of $r_N$ and $\Psi_N$ corresponding to the current simulation are displayed.
}
\label{fig:hist:2}
\end{figure}


\begin{figure}[h!]
\centering
\includegraphics[width=.45\textwidth]{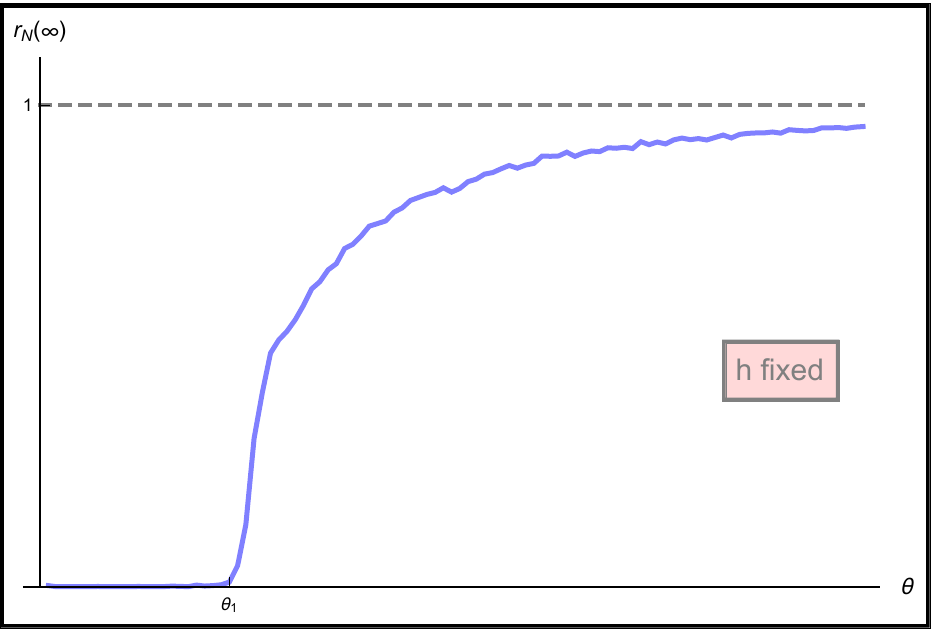}
\caption{Dependence of the steady-state coherence $r_N(\infty)$ on the coupling strength $\theta$ for fixed~$h$.}
\label{fig:theta:r}
\end{figure}

Simulations indicate that the paramagnetic solution is globally stable for $\theta < \theta_1$; whereas, it becomes unstable for $\theta > \theta_1$, when ferromagnetic equilibria arise. In the multiplicity phase the numerics further suggest that the dynamics approach either $\left( r_+, \frac{\pi}{2} \right)$ or $\left( r_+, \frac{3\pi}{2} \right)$. All the other ferromagnetic stationary points are unstable. In other words, it seems there are only two attracting states for each value of $\theta > \theta_1$. Indeed dynamical simulations are confirmed by the analysis of the free energy. The model is reversible and therefore the evolution is driven by a free energy $\mathcal{F}_{\theta,h}$ that corresponds (up to an additive constant) to the large deviation functional of the invariant measure. Let $q$ be a stationary solution of \eqref{MKV:Equations}; following \cite{DaGa89} we obtain
\begin{multline}\label{free:energy}
\mathcal{F}_{\theta,h} (r_q, \Psi_q) = \int_{\{-1,1\}}\int_{0}^{2\pi} \left[ -\frac{\theta}{2} \, r_q \cos(\Psi_q-x) - h\eta \cos x \right] q(x,\eta) \, dx  \, \mu(d\eta) \\
+ \frac{1}{2} \int_{\{-1,1\}} \int_{0}^{2\pi} q(x,\eta)\log[2\pi q(x,\eta)] \, dx \, \mu(d\eta).
\end{multline}
To study the stability of the various equilibria we found, we checked their relative heights on the free energy surface: we solved numerically the self-consistency relation \eqref{eqBessel} and we plugged the obtain pair(s) $(r,\Psi)$ in \eqref{free:energy}. Moreover, to visualize the energy landscape, we plotted directly the surface \eqref{free:energy}. See Fig.~\ref{fig:free:energy} for an example. We tested several choices of $\theta$ and $h$ for each region of the parameter space.  In all multiplicity phases  the free energy functional has minima at the spin-flop points $\left( r_+, \frac{\pi}{2} \right)$ and $\left( r_+, \frac{3\pi}{2} \right)$. Whereas, in phases {\sf 4} and {\sf 6}, the ferromagnetic solutions having $\Psi \in \{0,\pi\}$ are either maxima or saddles for $\mathcal{F}_{\theta, h}$ and hence always unstable. This study  supports the idea that a \emph{spin-flop transition} occurs when increasing the coupling strength.\\ 
Finally we would remark that, if we fix $h$ and vary $\theta$, there is no ordering between $r_+(\theta)$, $\overline{r}_+(\theta)$ and $\hat{r}_+(\theta)$. On the other hand, for fixed $\theta$ and variable $h$, we have a monotone relation between the different coherence indicators seen as functions of $h$. Namely, in phase {\sf 4} we always get $r_+(h) \leq \overline{r}_+(h)$ and in regime {\sf 6} it holds $\hat{r}_+(h) \leq r_+(h) \leq \overline{r}_+(h)$.

\begin{figure}[p]
\centering
\subfigure[Phase \textsf{0}.]{
\includegraphics[height=5cm]{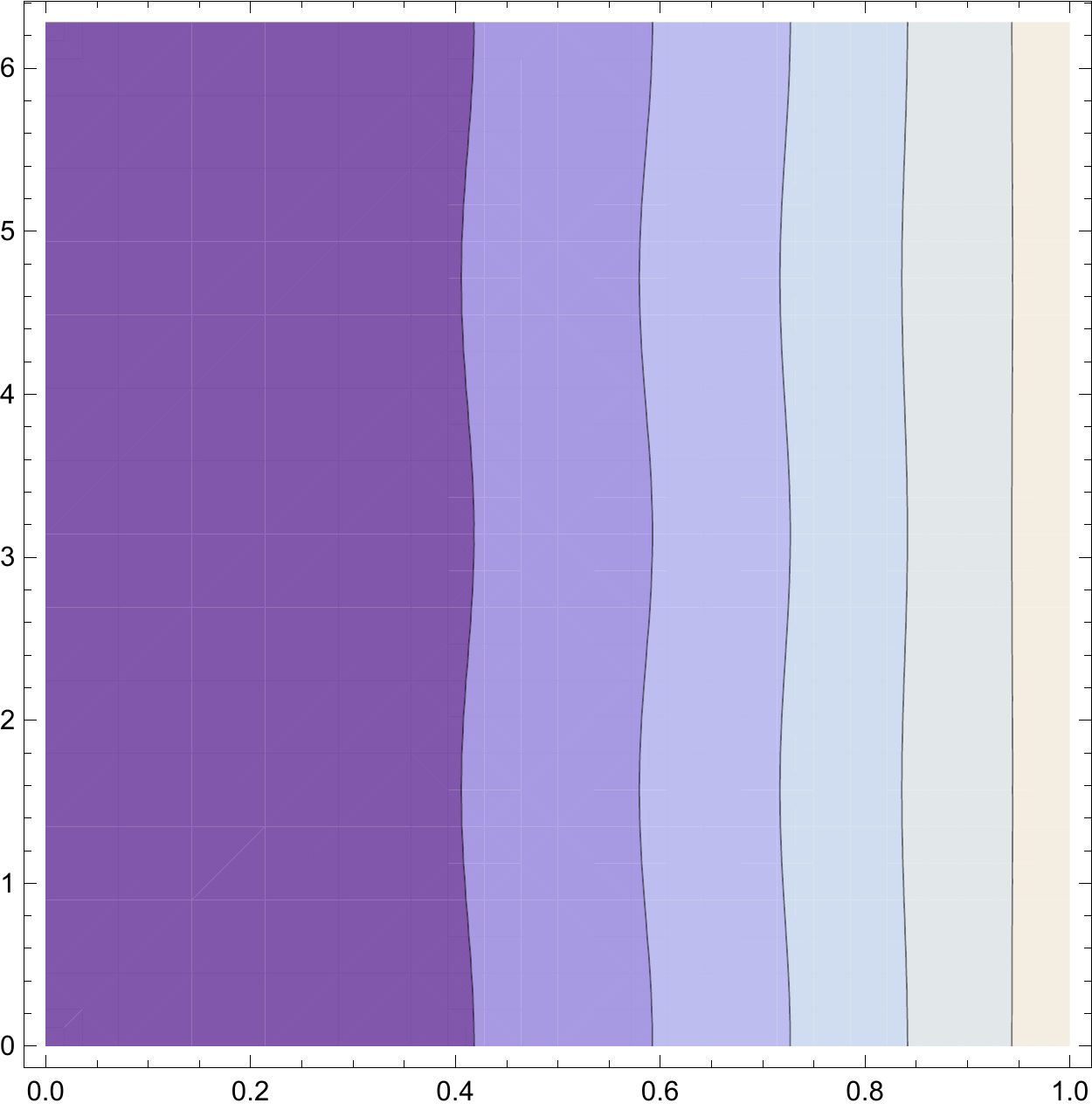}
\includegraphics[width=.45\textwidth]{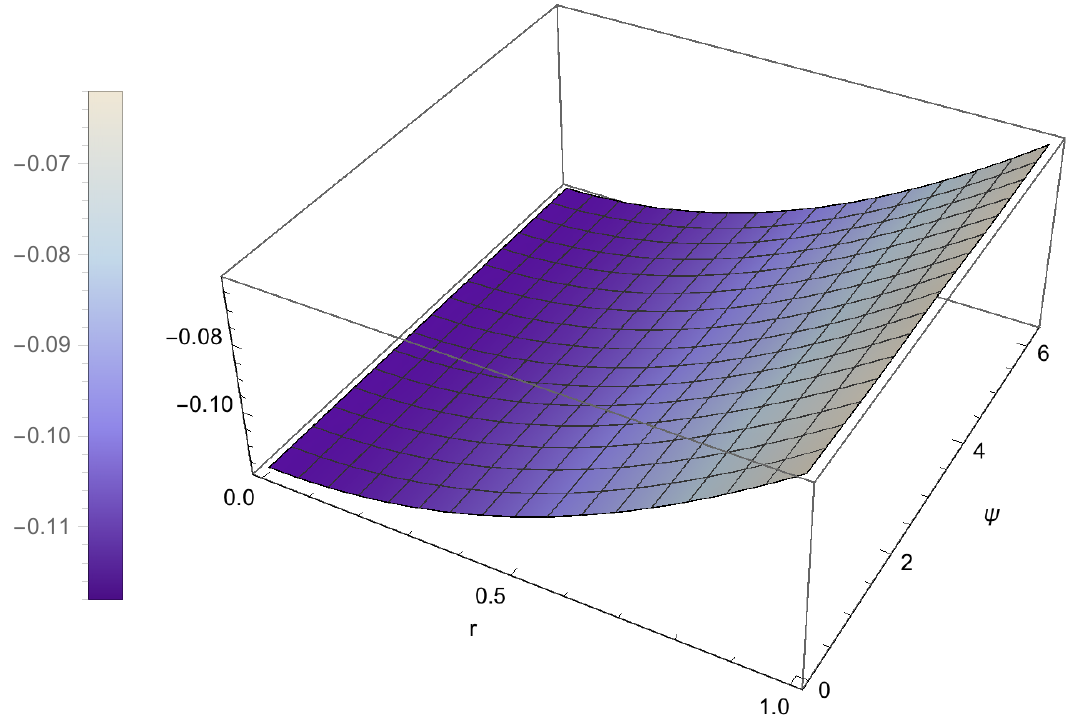}
}
\subfigure[Phase \textsf{2}.]{
\includegraphics[height=5cm]{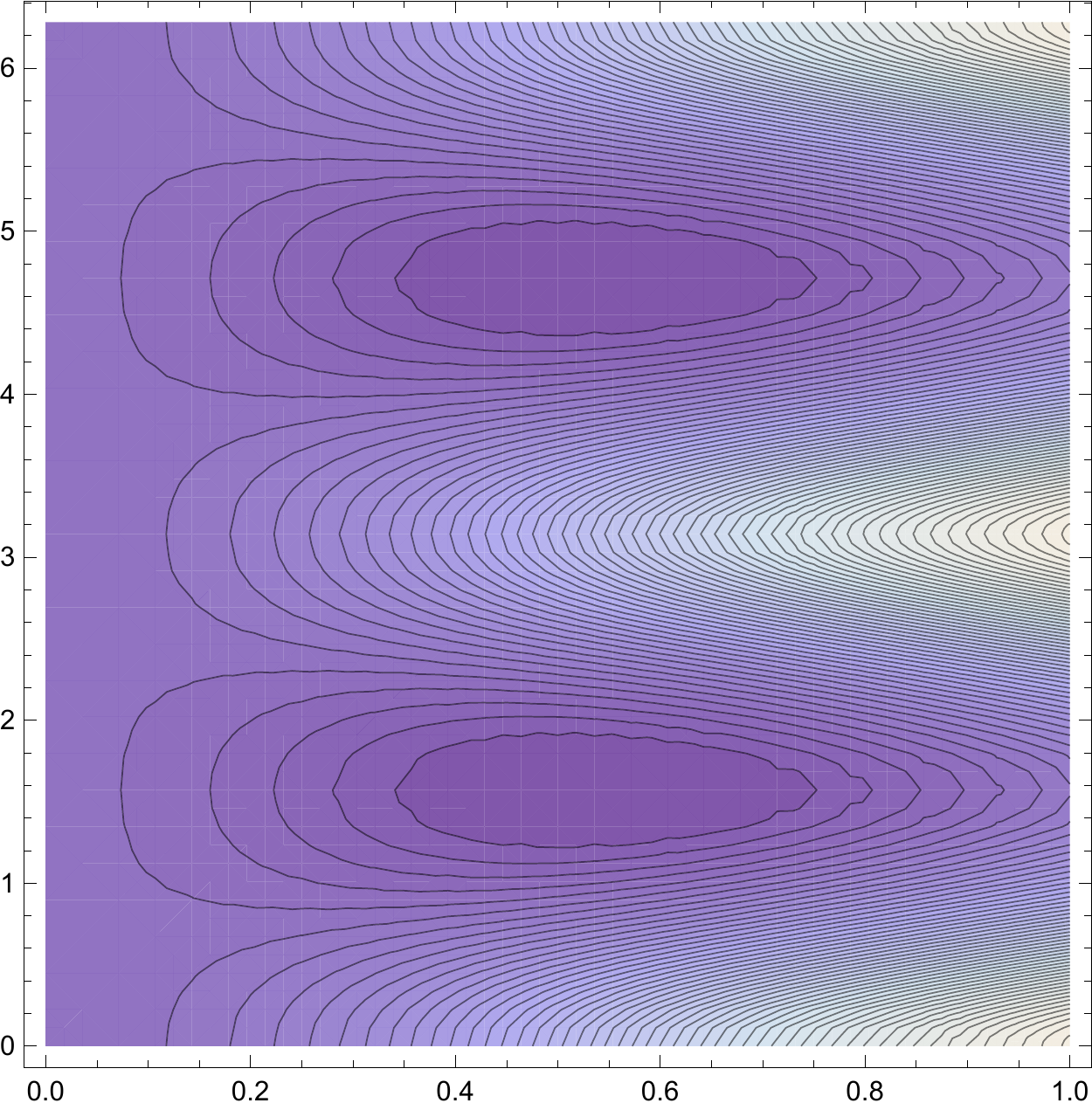}
\includegraphics[width=.45\textwidth]{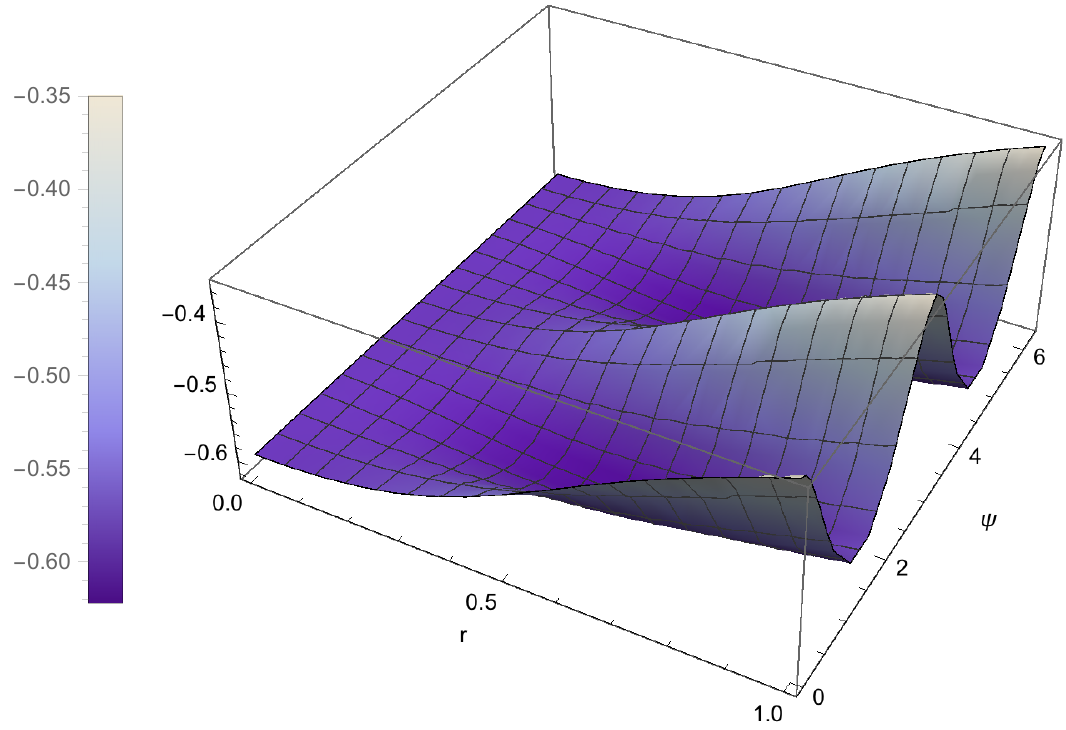}
}
\subfigure[Phase \textsf{4}.]{
\includegraphics[height=5cm]{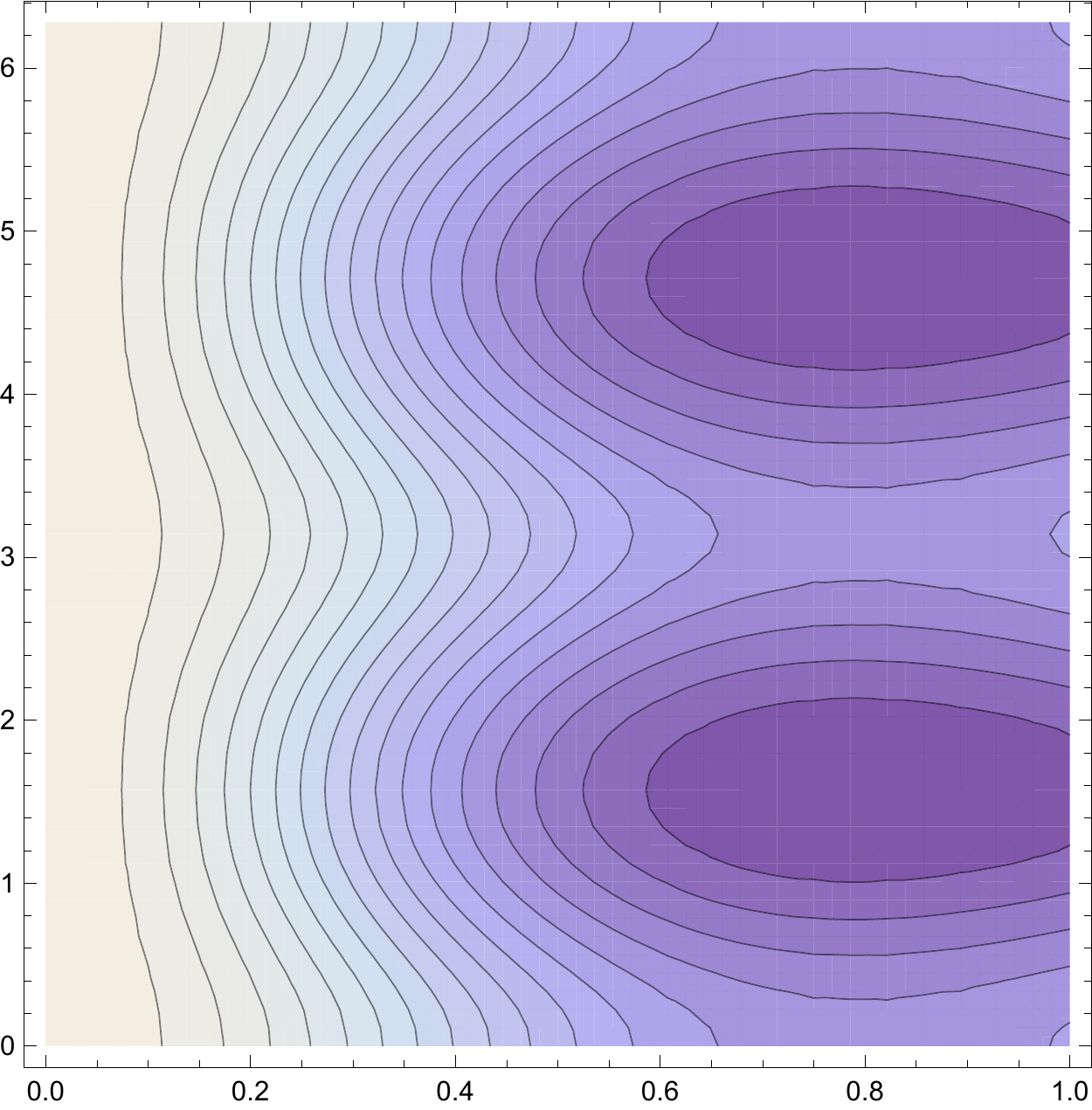}
\includegraphics[width=.45\textwidth]{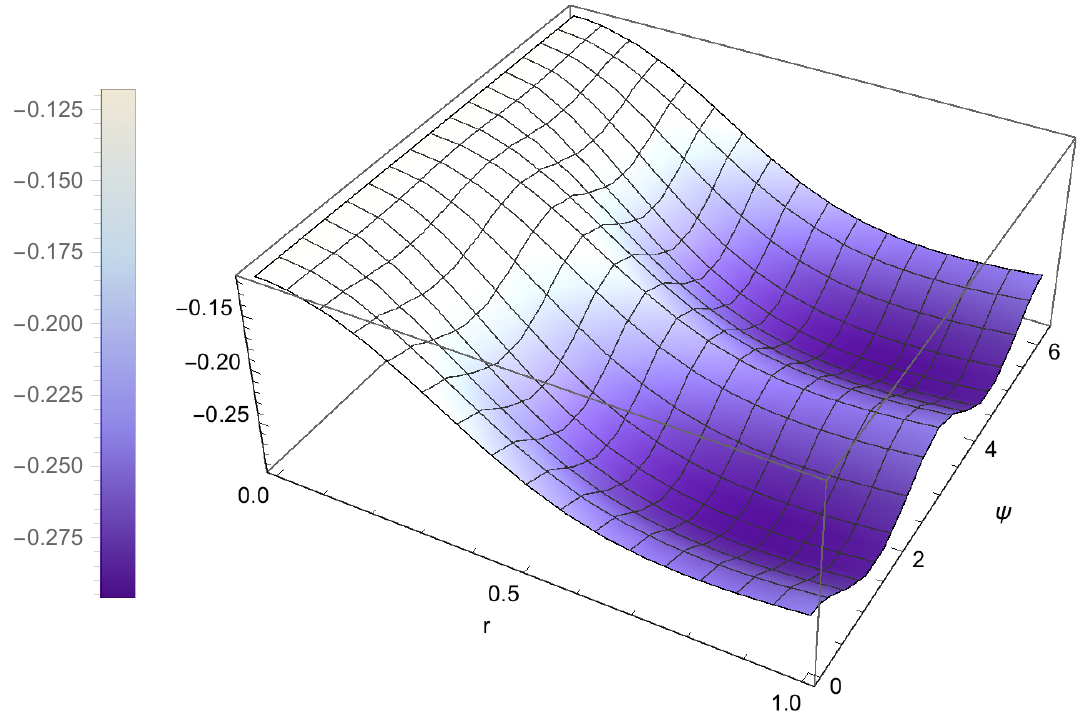}
}
\subfigure[Phase \textsf{6}.]{
\includegraphics[height=5cm]{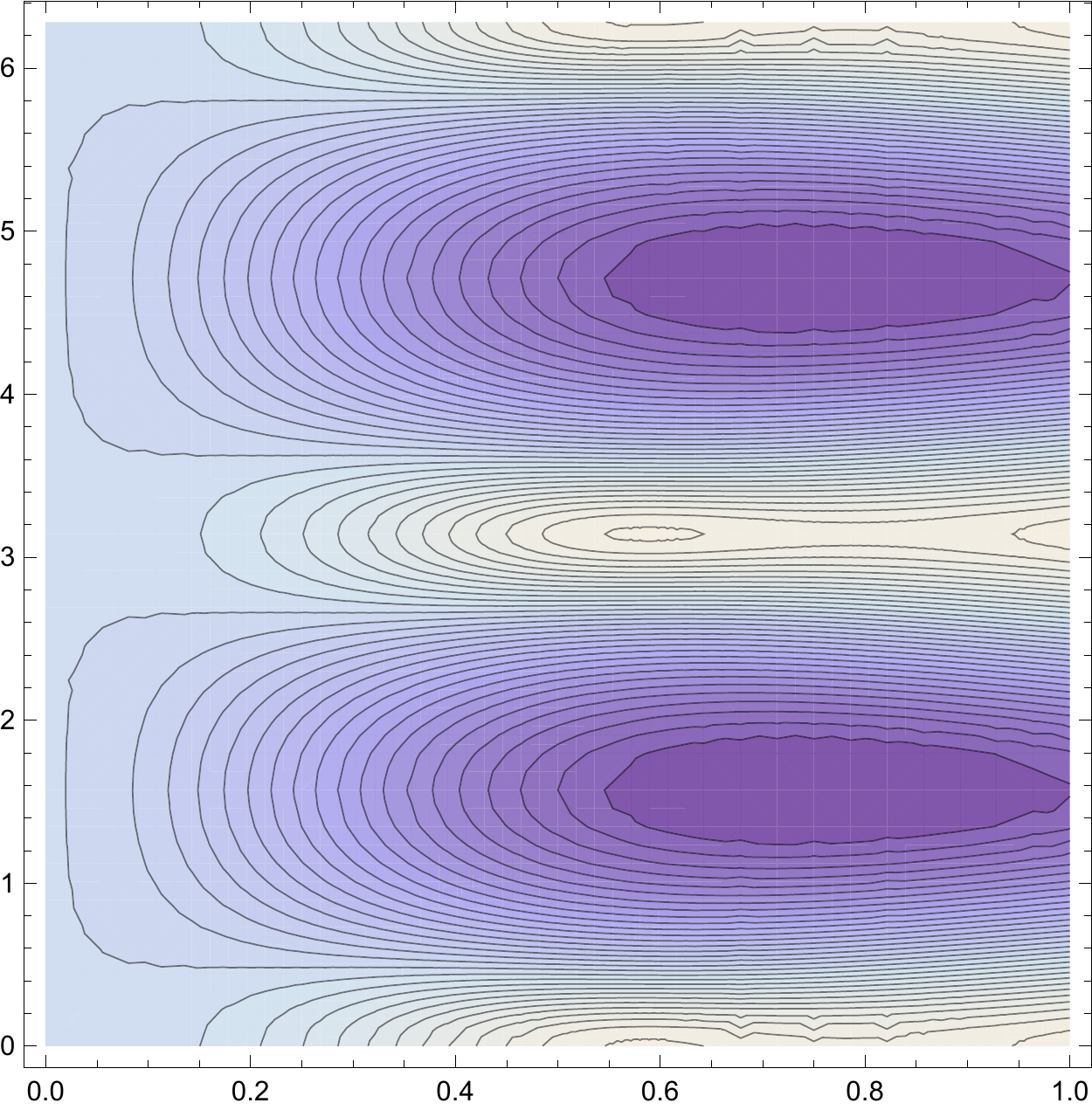}
\includegraphics[width=.45\textwidth]{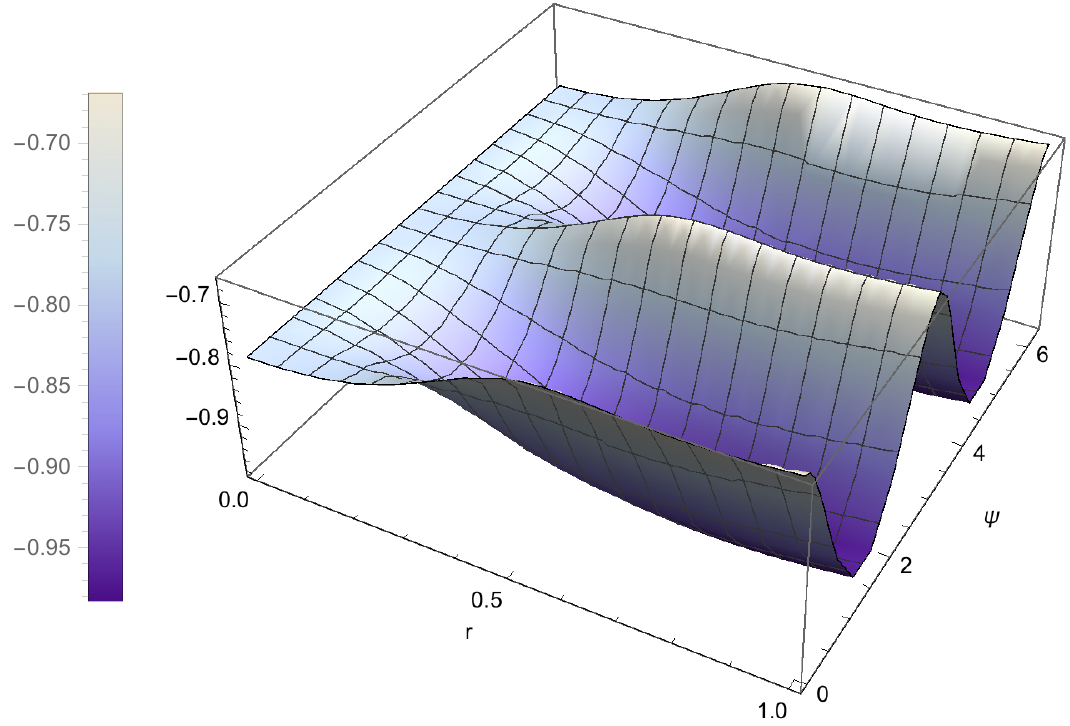}
}
\caption{The picture shows the free energy surface and the corresponding contour plot for several values of the parameters. A representative image for each of the phases in Fig.~\ref{fig:ph:dia} is displayed. Color convention: the darker the color, the lower the height of the surface.}
\label{fig:free:energy}
\end{figure}

\subsection{Critical fluctuations}

An important and interesting further step would be to understand how macroscopic observables fluctuate around their mean values when the system is put at the critical point. In this regime to obtain a limit theorem describing the fluctuations of the empirical measure process as $N \to +\infty$ we construct a process of the form 
\begin{equation}\label{fluct}
 N^{\frac{1}{4}} \left[ \rho_N \left( N^{\alpha}t \right) - q \right] 
\end{equation}
for suitable $\alpha > 0$. There are two notable features of this rescaling. On the one hand, we have a non-Gaussian spatial scale ($N^{\frac{1}{4}}$ instead of $N^{\frac{1}{2}}$); this implies that critical fluctuations are spatially larger than non-critical ones. On the other, the process must be observed in fast time $N^{\alpha}t$ because of the phenomenon of ``critical slowing down''. It means that the fluctuations persist over long time scale.\\ 
We would like to determine the exponent $\alpha$ such that \eqref{fluct} admits a meaningful limit in the sense of weak convergence. It is not clear a priori what to expect as time scale. The addition of disorder may have a drastic impact on the fluctuation process and change the time scale at which it exists. \\
In the paper \cite{CoDaP12} the authors analyze how disorder affects the dynamics of critical fluctuations for two different types of interacting particle system: the Curie-Weiss and Kuramoto models in random environment. The interesting point is that when disorder is added, spin and rotator systems belong to different universality classes, which is not the case for their non-disordered counterparts. Hence the disorder is responsible for destroying universality. Roughly speaking in the Curie-Weiss model the fluctuations produced by the disorder always prevail in the critical regime: these fluctuations evolve in a time scale which is much shorter ($\alpha=\frac{1}{4}$) than the corresponding one for homogeneous system ($\alpha=\frac{1}{2}$). For rotators, the disorder does not modify the ``standard'' slowing down ($\alpha=\frac{1}{2}$).\\
The question is why does it happen? The random Kuramoto model in \cite{CoDaP12} is not reversible\footnote{
\textbf{Kuramoto model in random environment \cite{CoDaP12}.} Given a configuration \mbox{$\underline{x} \in [0,2\pi)^N$} and a realization of the random environment $\underline{\eta} \in \{ -1,+1\}^N$, we can define the Hamiltonian $H_N(\underline{x},\underline{\eta}):  [0,2\pi)^N \times \{ -1,+1\}^N  \longrightarrow  \mathbb{R}$ as
\begin{equation}\label{Hamiltonian:K}
 H_N(\underline{x},\underline{\eta})=-\frac{\theta}{2N}\sum_{j,k=1}^N \cos(x_k-x_j) - h \sum_{j=1}^N  \eta_j  x_j \,,
\end{equation}
where $x_j$ is the position of rotator at site $j$; the disorder term $h \eta_j$, with $h>0$, can be interpreted as its own frequency and $\theta > 0$ is the coupling strength. It is important to notice that the system \eqref{Hamiltonian:K} is not reversible unless $h=0$.
}
 and moreover presents a discrepancy between the symmetry type of the state and the disorder variables (rotational vs. up/down). We wonder if this difference is due to the reversibility/irreversibility or rather symmetry issues. The general idea is therefore to consider two modifications of the random Kuramoto model, aimed at getting a reversible system with disorder having either up/down or rotational symmetry, and then make a comparison between the time scale of critical fluctuations. The XY model can be read as the variation that accounts for the reversibility plus up/down symmetry case. As a first step it would be interesting to investigate its critical fluctuations and compare them to those of the Curie-Weiss model. 
\\ 

Both stability properties of the steady solutions and the behavior of fluctuations appear to be difficult to determine analytically and are left unsolved by the present paper. To prove them rigorously one should have the complete control over the spectrum of the linearization of the operator \eqref{Operator:MKV} that is an open and difficult problem at the moment. We feel that this analysis may deserve much more room and defer some more detailed work to future research. 

\section{Proofs}\label{sc:proofs}

\subsection{Proof of Proposition \ref{prop:stat:sols}}

An equilibrium probability density for \eqref{MKV:Equations} must satisfy
\begin{equation}\label{1}
\frac{1}{2}\frac{\partial^2 q}{\partial x^2}(x,\eta) = \frac{\partial}{\partial x} \left\{ \left[ \theta r \sin(\Psi - x) - h \eta \sin x \right] q(x,\eta) \right\}
\end{equation}
Respecting normalization ($\int_0^{2\pi} q(x,\eta) \, dx = 1$) and periodic boundary conditions ($q(0,\eta)=q(2\pi,\eta)$ for every $\eta$), we can solve \eqref{1} and complete the proof.

%

\subsection{Proofs of Proposition \ref{prop:r=0} and Proposition \ref{integralr}}

Every stationary solution $(r,\Psi)$ has to satisfy the self-consistency relation \eqref{Order:Parameter:Stat}, which is equivalent to conditions 
\begin{align}
r  &= \int_{\{-1,+1\}} \int_0^{2\pi} \cos( x-\Psi ) \, q (x,\eta) \, dx \, \mu(d\eta) \label{eqRe1}
\\
0 &= \int_{\{-1,+1\}} \int_0^{2\pi} \sin(x-\Psi ) \,  q (x,\eta) \, dx \, \mu(d\eta) \,, \label{eqIm1} 
\end{align}
where $q (x,\eta)=[Z(\eta)]^{-1} \cdot \exp \{2\theta r \cos(\Psi -x)+2h\eta\cos x \}$. By standard trigonometric formulas, equations \eqref{eqRe1} and \eqref{eqIm1} can be rewritten as 
%
%
%
%
\begin{align}
r  & = \frac{1}{2} \cos \Psi\biggl [ \int_0^{2\pi} \cos x \, q (x,+1)dx + \int_0^{2\pi} \cos x \, q (x,-1)dx\biggr] \nonumber\\
& \qquad \qquad + \frac{1}{2} \sin \Psi\biggl [\int_0^{2\pi}\sin x \, q (x,+1) dx +\int_0^{2\pi}\sin x \, q (x,-1) dx\biggr] \label{eqRe} \tag{\ref{eqRe1}$'$} \\
& \nonumber \\
0 & =  \cos \Psi \biggl [\int_0^{2\pi} \sin x \, q (x,+1) dx +\int_0^{2\pi}\sin x \, q (x,-1) dx\biggr ] \nonumber\\
& \qquad \qquad -  \sin \Psi\biggl[ \int_0^{2\pi} \cos x \, q (x,+1)dx + \int_0^{2\pi} \cos x \, q (x,-1)dx\biggr ]. \label{eqIm} \tag{\ref{eqIm1}$'$}
\end{align}
All the integrals involved can be rephrased in terms of Bessel functions. We make the main steps explicit for $\int_0^{2\pi} \cos x \, q (x, +1) \, dx$, the remaining integrals can be dealt with similarly. We have,%
\begin{align}
Z(+1) \int_0^{2\pi} \cos x \, q (x,+1) dx 
&=\sum_{k \in 2\mathbb{N}+1} \frac{1}{k!} \int_0^{2\pi} \cos x \, \Bigl [2\theta r \cos(\Psi -x)+2h\cos x \, \Bigr ]^k dx \nonumber\\
&= 2 \pi (h+\theta r \cos \Psi) \sum_{k=0}^{+\infty} \frac{\left( h^2+\theta^2r ^2+2\theta h r \cos \Psi\right)^k}{\Gamma(k+2)\Gamma(k+1)} \nonumber\\
& =2\pi (h+\theta r \cos \Psi)\frac{I_1\left( 2\sqrt{h^2+\theta^2r ^2+2\theta h r \cos \Psi} \right)}{\sqrt{h^2+\theta^2r ^2+2\theta h r \cos \Psi}}, \label{conv:Bessel}
\end{align}
where $\Gamma ( \cdot)$ is the Gamma function and $I_{v} (\cdot)$ denotes the first kind modified Bessel function of order~$v$. The derivation of \eqref{conv:Bessel} is postponed to Appendix~\ref{app:integral}. In the same manner we calculate
%
%
%
%
%
\[
Z(-1) \int_0^{2\pi} \cos x \, q (x,-1) dx = - 2\pi (h-\theta r \cos \Psi) \, \frac{I_1\left( 2\sqrt{h^2+\theta^2r ^2-2\theta h r \cos \Psi} \right)}{\sqrt{h^2+\theta^2r ^2-2\theta h r \cos \Psi}},
\]
\[
Z(\pm 1) \int_0^{2\pi} \sin x \, q (x,\pm 1) dx = 2\pi (\theta r \sin \Psi) \, \frac{I_1\left( 2\sqrt{h^2+\theta^2r ^2 \pm 2\theta h r \cos \Psi} \right)}{\sqrt{h^2+\theta^2r ^2 \pm 2\theta h r \cos \Psi}}
\]
and the normalizing constants
\[
Z(\pm 1) = 2\pi I_0 \left( 2\sqrt{h^2+\theta^2r ^2 \pm 2\theta h r \cos \Psi} \right).
\]
By plugging what we obtained into equations \eqref{eqRe} and \eqref{eqIm}, we get
\begin{align}
r &= \frac{\theta r + h \cos \Psi}{\sqrt{h^2+\theta^2r ^2+2\theta h r \cos \Psi}} \, \frac{I_1 \left( 2\sqrt{h^2+\theta^2r ^2+2\theta h r \cos \Psi} \right)}{I_0 \left( 2\sqrt{h^2+\theta^2r ^2+2\theta h r \cos \Psi} \right)} \nonumber \\
& \qquad \qquad  \qquad + \frac{\theta r - h \cos \Psi}{\sqrt{h^2+\theta^2r ^2 - 2\theta h r \cos \Psi}} \, \frac{I_1 \left( 2\sqrt{h^2+\theta^2r ^2 - 2\theta h r \cos \Psi} \right)}{I_0 \left( 2\sqrt{h^2+\theta^2r ^2 - 2\theta h r \cos \Psi} \right)}  \label{eqRe3} \tag{\ref{eqRe1}$''$}\\
& \nonumber \\
0 &= h \sin \Psi \left[ \frac{I_1 \left( 2\sqrt{h^2+\theta^2r ^2+2\theta h r \cos \Psi} \right)}{\sqrt{h^2+\theta^2r ^2+2\theta h r \cos \Psi} \, I_0 \left( 2\sqrt{h^2+\theta^2r ^2+2\theta h r \cos \Psi} \right)} \right. \nonumber\\
&\qquad \qquad \qquad  \left. - \frac{I_1 \left( 2\sqrt{h^2+\theta^2r ^2-2\theta h r \cos \Psi} \right)}{ \sqrt{h^2+\theta^2r ^2-2\theta h r \cos \Psi} \, I_0 \left( 2\sqrt{h^2+\theta^2r ^2-2\theta h r \cos \Psi} \right)}\right].\label{eqIm3} \tag{\ref{eqIm1}$''$} 
\end{align}
Focus on equation \eqref{eqIm3}. It is equivalent either to $\sin \Psi = 0$ or the term into square brackets vanishes. By Lemma~\ref{tech:lemma} in Appendix~\ref{app:Bessel} we know that the function $g(z)=\frac{I_1(2\sqrt{z})}{\sqrt{z} \, I_0(2\sqrt{z})}$ is strictly decreasing on $]0,+\infty[$ and so the latter circumstance occurs if and only if the arguments of the two functions appearing into brackets are equal. Therefore, equation \eqref{eqIm3} admits solution when: (a) $\Psi = 0$ or $\Psi = \pi$; (b) $\Psi = \frac{\pi}{2}$ or $\Psi = \frac{3\pi}{2}$; (c) $r=0$. \\

It is easy to see that under (c) equation \eqref{eqRe3} is always satisfied, giving the statement of Proposition~\ref{prop:r=0}. Moreover, under (a) or (b) equation \eqref{eqRe3} reduces to \eqref{eqBessel} and this concludes the proof of Proposition~\ref{integralr}.

\subsection{Proof of Theorem \ref{thm:ph:dia}}

From Proposition~\ref{integralr} we infer that we have only to consider the cases when $\Psi \in \left\{ 0, \frac{\pi}{2}, \pi, \frac{3\pi}{2} \right\}$.  We divide the study in a few steps.\\

\textbf{Analysis for} $\Psi \in \left\{ \frac{\pi}{2}, \frac{3\pi}{2} \right\}$. We stick on the case $\Psi = \frac{\pi}{2}$, the other being similar. First we prove that for $\theta \leq \theta_1 (h)$ there are only paramagnetic solutions; whereas, for $\theta > \theta_1 (h)$, there exists a \emph{unique} ferromagnetic solution.\\
If we set $\Psi=\frac{\pi}{2}$, then $q(x,\eta) = [ Z(\eta)]^{-1} \cdot \exp \left\{ 2 \theta r \sin x + 2 h \eta \cos x\right\}$ and the self-consistency relation \eqref{Order:Parameter:Stat} is equivalent to the conditions 
\begin{align}
r &= \int_{\{-1,+1\}} \int_0^{2\pi} \sin x \, q(x,\eta) \, dx \, \mu(d\eta) \label{r}
\\
0 &= \int_{\{-1,+1\}} \int_0^{2\pi} \cos x \,  q(x,\eta) \, dx \, \mu(d\eta) \,.\label{0}
\end{align}
We must show that \eqref{r} has a positive solution and that \eqref{0} is always satisfied.\\
First observe that
\begin{align*}
\int_{\{-1,+1\}} \int_0^{2\pi} \sin x \, q(x,\eta) \, dx \, \mu(d\eta) &= \frac{1}{2} \left[ \int_0^{2\pi} \sin x \, q(x,-1) \, dx \,  + \int_0^{2\pi} \sin x \, q(x,+1) \, dx \,  \right]\\
&\hspace{-0.5cm} \stackrel{\mbox{\tiny ($y=\pi-x$)}}{=} \frac{1}{2} \left[ \int_{-\pi}^{\pi} \sin y \, q(y,+1) \, dy + \int_{0}^{2\pi} \sin x \, q(x,+1) \, dx \right] \\
&\hspace{-0.6cm} \stackrel{\mbox{\tiny (periodicity)}}{=} \int_{0}^{2\pi} \sin x \, q(x,+1) \, dx
\end{align*}
and analogously
\[
\int_{\{-1,+1\}} \int_0^{2\pi} \cos x \, q(x,\eta) \, dx \, \mu(d\eta) = 0 \,.
\]
Therefore, \eqref{0} is proved and it remains to show that \eqref{r} admits a solution $r>0$. Let us define the functional
\[
F_1(r) := \frac{\displaystyle{\int_0^{2\pi} \sin x \, \exp \{2 \theta r \sin x + 2 h \cos x\} \, dx}}{\displaystyle{\int_0^{2\pi} \exp \{2 \theta r \sin x + 2 h \cos x\} \, dx}}  \,.
\]
We look for a positive solution of the fixed point equation $r = F_1(r)$. Observe that by \eqref{eqBessel} we can rewrite $F_1$ as
\[
F_1(r) = \frac{\theta r}{\sqrt{h^2+\theta^2r^2}} \frac{I_1(2\sqrt{h^2+\theta^2r^2})}{I_0(2\sqrt{h^2+\theta^2r^2 })}
\]
and, since we are interested in solutions $r \neq 0$, our problem translates in finding a positive solution for the equation
\[
\tilde{F}_1(r) = 1,
\]
where $\tilde{F}_1(r) := \frac{\theta}{\sqrt{h^2+\theta^2r^2}} \frac{I_1(2\sqrt{h^2+\theta^2r^2})}{I_0(2\sqrt{h^2+\theta^2r^2 })}$. We have

\begin{itemize}
\item[$\bullet$] $\tilde{F}_1(0) = \frac{\theta}{h} \frac{I_1(2h)}{I_0(2h)}$ and $\tilde{F}_1$ is continuous in $[0,1]$ for all the values of the parameters.
\item[$\bullet$] $\tilde{F}_1(r)$ is strictly decreasing in $]0,1]$. Note that $\tilde{F}_1(r) = (g \circ f) (r)$ with $g(z) := \frac{I_1(2\sqrt{z})}{\sqrt{z} \, I_0(2\sqrt{z})}$ and $f(z) := h^2 + \theta^2 z^2$. Moreover, $g$ is strictly decreasing on $]0,1]$ by Lemma~\ref{tech:lemma} in Appendix~\ref{app:Bessel}; whereas, $f$ is strictly increasing. Therefore $F_1$ is strictly decreasing as it is a composition of a decreasing and an increasing function.
\end{itemize}

Hence, if $\tilde{F}_1(0) > 1$ then $\tilde{F}_1(r)$ intersects the horizontal line $1$ exactly once; on the contrary, whenever $\tilde{F}_1(0) \leq 1$ there are no crosses. To conclude it is sufficient to notice that $\theta_1(h)$ as defined in the statement of Theorem~\ref{thm:ph:dia} equals $h \frac{I_0(2h)}{I_1(2h)}$.

\bigskip

\textbf{Analysis for} $\Psi \in \left\{ 0, \pi \right\}$. We stick on the case $\Psi = 0$, the other being similar. We want to show that for $\theta > \theta_2 (h)$, there is \emph{exactly one} ferromagnetic solution and moreover, that there exists a further critical value $\theta_\star (h)$, with $\theta_\star (h) < \theta_2 (h)$, such that if \mbox{$\theta \leq \theta_\star (h)$} there are only paramagnetic solutions, while if $ \theta_\star (h) < \theta < \theta_2 (h)$ there are \emph{two} ferromagnetic ones.\\
If we set $\Psi=0$, then $q(x,\eta) = [Z(\eta)]^{-1} \cdot \exp \left\{ 2 (\theta r + h \eta ) \cos x \right\}$ and the self-consistency relation \eqref{Order:Parameter:Stat} is equivalent to the conditions 
\begin{align}
r &= \int_{\{-1,+1\}} \int_0^{2\pi} \cos x \, q(x,\eta) \, dx \, \mu(d\eta) \label{Re:part:2}
\\
0 &= \int_{\{-1,+1\}} \int_0^{2\pi} \sin x \,  q(x,\eta) \, dx \, \mu(d\eta) \,. \label{Im:part:2} 
\end{align}
We must show that \eqref{Re:part:2} has a positive solution and that \eqref{Im:part:2} is always satisfied.\\
First observe the $x$-integral in \eqref{Im:part:2} has an explicit anti-derivative that being $2\pi$-periodic makes the whole integral vanish for all values of the parameters. Therefore, \eqref{Im:part:2} is proved and it remains to show that \eqref{Re:part:2} admits solutions $r>0$. Let us define the functional
\[
F_2(r) :=  \int_{\{-1,+1\}} \,  \frac{\displaystyle{ \int_0^{2\pi} \cos x \, \exp \{ 2 (\theta r + h \eta ) \cos x \} \, dx}}{\displaystyle{\int_0^{2\pi} \exp \{ 2 (\theta r + h \eta ) \cos x \} \, dx}}  \, \mu (d\eta) \,.
\]
We look for a solution of the fixed point equation $r = F_2(r)$. We have
\begin{itemize}
\item[$\bullet$] 
$F_2(0)=0$ and $F_2$ is continuous in $[0,1]$ for all values of the parameters.
\item[$\bullet$] 
$\lim_{r \to +\infty} F_2(r) = 1$; indeed, as $r \to +\infty$ the function $x \mapsto \exp \{2 (\theta r +  h \eta ) \cos x \}$ becomes sharply peaked around $x = 0$ and so does also $x \mapsto \cos x \exp \{2 (\theta r + h \eta) \cos x \}$. Consequently, $\lim_{r \to +\infty} F_2(r) = \int_{\{-1,+1\}} \mu (d\eta) = 1$.
\item[$\bullet$] 
$F_2$ is strictly increasing; indeed, the first derivative of $F_2$ with respect to $r$ is given by
\begin{align*}
 F_2'(r) &= 2 \theta \int_{\{-1,+1\}} \biggl[ \int_0^{2\pi} \cos^2 x \, q(x,\eta) \, dx - 2 \theta  \biggl (  \int_0^{2\pi} \cos x \, q(x,\eta) \, dx \biggr  )^2 \biggr] \, \mu (d\eta) \\ 
              &=2 \theta \, \mathbb{E}_{\mu} \left[ \mathbb{V}ar_{q (x,\eta)} \left( \cos X \right) \right]
\end{align*}
which is a strictly positive quantity, since expected value of a variance. Moreover, it is readily seen that
\begin{align*}
F_2'(0) &= \theta \left[ \mathbb{V}ar_{q^{(0)} (x,+1)} \left( \cos X \right) + \mathbb{V}ar_{q^{(0)} (x,-1)} \left( \cos X \right)\right]\\
&\hspace{-0.6cm} \stackrel{\left( \parbox{1.2cm}{\tiny \centering $y = \pi - x$ \\ and \\ periodicity} \right)}{=} 2 \theta \, \mathbb{V}ar_{q^{(0)} (x,+1)} \left( \cos X \right).
\end{align*}
\item[$\bullet$] 
The second derivative is equal to 
\begin{equation}\label{d2F2}
F_2''(r) =  4\theta^2 \mathbb{E}_{\mu} \left\{ \mathbb{E}_{q(x,\eta)} \left[ \cos X - \mathbb{E}_{q(x,\eta)}(\cos X) \right]^3 \right\}
\end{equation}
and changes sign depending on the parameters, so that is not possible to conclude by a standard concavity argument.
Nevertheless from numerics we see that there is at most one sign change (we checked the number of zeros for \eqref{d2F2} in the region $[0,10] \times [0,10]$ of the parameter space $(h, \theta)$ on a grid of mesh-size $0.1$ and in the region $[0,10] \times [10,30] \cup [10,30] \times [0,30]$ on a grid of mesh-size $0.5$). As a consequence, $F_2$ changes the curvature at most once\footnote{This is the only point in our rigorous proof where we used numerical assistance. }.

Therefore we can argue as follows. Since as $r \rightarrow +\infty$ the function $F_2(r)$ approaches $1$ from below, it must be concave for large $r$. Then,
\begin{itemize}
\item
if $\theta \leq \theta_2(h)$ and $F_2''(r) \leq 0$ in a right-neighborhood of $r=0$, $F_2(r)$ is strictly concave on $[0,1]$ for any values of the parameters and hence there is no intersection with the diagonal.
\item
if $\theta \leq \theta_2(h)$ and $F_2''(r) > 0$ in a right-neighborhood of $r=0$, $F_2(r)$ changes curvature either below or above the diagonal, giving rise to none or precisely two positive fix points. The boundary between these two regions is represented by the dashed green line $\theta_\star (h)$ in Fig.~\ref{fig:ph:dia}. It has been obtained numerically and corresponds to the choice of parameters where there exists $r > 0$ such that $F_2(r)=r$ and $F_2'(r)=1$. Note that the curves $\theta_2$ and $\theta_\star$ coincide for $h \in \left[ 0,\bar{h} \right]$ and then separate at $h=\bar{h}$. 
\item
if $\theta > \theta_2(h)$, no matter if either $F_2''(r) \leq 0$ or $F_2''(r) > 0$ in a right-neighborhood of $r=0$, the curve $F_2(r)$ crosses the diagonal at precisely one positive $r$.
\end{itemize}  
We are left to understand which is the curvature of $F_2$ around $r=0$. To infer some information we Taylor expand the function and, by means of the representation \eqref{eqBessel}, we obtain
\[
F_2 (r) =  \theta \left[ \frac{I_0^2(2h) + I_0(2h)I_2(2h) -2I_1^2(2h)}{I_0^2(2h)} \right] r + \theta^3 K(h) r^3 + O(r^4)
\]
with
\begin{multline*}
K(h) := \frac{-3I_0^4(2h) +I_0^3(2h) \left[ I_4(2h) - 8I_2(2h)  \right] + I_0^2(2h) \left[ 24 I_1^2(2h) -6I_2^2(2h) - 8I_1(2h) I_3(2h)\right]}{I_0^4(2h)}\\ 
+ \frac{48 I_0(2h) I_1^2(2h) I_2(2h) - 48 I_1^4(2h)}{I_0^4(2h)},
\end{multline*}
where $I_v(\cdot)$ denotes a first kind modified Bessel function of order $v$. The function $K(h)$  admits a (unique) zero at $\bar{h} \simeq 0.514443$. The graph of $K(h)$ is shown in Fig.~\ref{fig:kappa}. Therefore, if $h \leq \bar{h}$ the function $F_2(r)$ starts concave; whereas, if $h > \bar{h}$ it starts convex.
\begin{figure}[h!]
 \centering
\includegraphics[width=.45\textwidth]{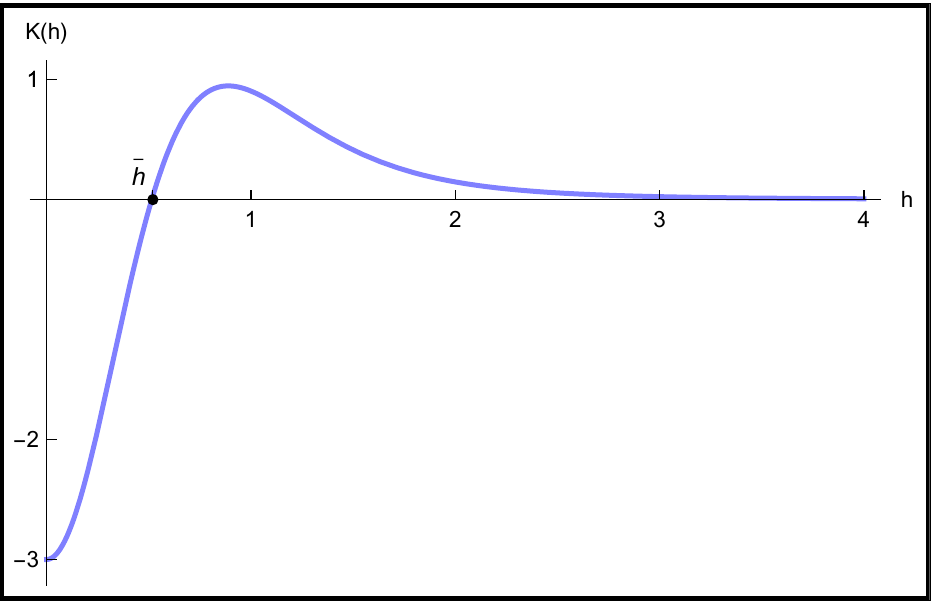}
\caption{Plot of the function $K(h)$. The value $\bar{h}$ is defined by the equation $K(h)=0$, which is equivalent to~\eqref{h:bar}.}
\label{fig:kappa}
\end{figure}
\end{itemize}

\bigskip
To conclude the proof it remains to show that $\theta_1(h) < \theta_2(h)$, for every $h > 0$. If $I_v(\cdot)$ denotes a first kind modified Bessel function of order $v$, we must prove that 
\[
1 - 2 \int_0^{2\pi} \cos^2 x \, q^{(0)} (x,+1) \, dx + \left( \int_0^{2\pi} \cos x \, q^{(0)} (x,+1) \, dx \right)^2 =\frac{I_1^2 (2h) - I_0 (2h) I_2 (2h)}{I_0^2 (2h)} 
\]
is strictly positive for all values of $h > 0$. The assertion follows from the inequality~\cite{JoBi91}
\begin{equation}\label{ineq:Bessel}
I_v^2 (y) - I_{v-n} (y) I_{v+n} (y) > 0, \quad \mbox{ whenever } v > 0, y > 0, n \geq 1.
\end{equation} 

\appendix

\section{Appendix}

\subsection{Derivation of formula \eqref{conv:Bessel}}\label{app:integral}

We devote this section to compute $\int_0^{2\pi} \cos x \, q (x, +1) \, dx$. To shorten our notation, let us introduce constants
\[
A := \theta r \cos \Psi + h \quad \mbox{ and } \quad B := \theta r \sin \Psi \,.
\] 
Therefore, we have
\[
Z(+1) \int_0^{2\pi} \cos x \, q (x, +1) \, dx = Z(+1) \int_0^{2\pi} \cos x \, \exp \left\{ 2 A \cos x + 2 B \sin x \right\} \, dx 
\]  
and, by using the power series expansion for the exponential function, the right-hand side can be expressed as a sum over odd terms (being the even ones zero)
\begin{equation}\label{series:1}
Z(+1) \int_0^{2\pi} \cos x \, q (x,+1) dx = \sum_{k=0}^{+\infty} \frac{2^{2k+1}}{(2k+1)!} \int_0^{2\pi}\cos x \, \left( A \cos x + B \sin x \right)^{2k+1} dx \,.
\end{equation}
By writing the trigonometric functions in terms of the complex exponential we can expand the powers of binomials to get
\begin{multline*}
\int_0^{2\pi}\cos x \, \left( A \cos x + B \sin x \right)^{2k+1} dx \\
= \frac{1}{2^{2(k+1)}} \sum_{j=0}^{2k+1} \sum_{h=0}^{j+1} \sum_{\ell=0}^{2k+1-j} (-1)^{\ell} {2k+1 \choose j} {j+1 \choose h} {2k+1-j \choose \ell} A^j (iB)^{2k+1-j} \int_0^{2\pi} e^{2(h+\ell-k-1)ix} \, dx \,.
\end{multline*}
Now observe that 
\begin{itemize}
\item[$\bullet$]  
the only non-zero terms in the triple sum are those for which $h + \ell - k - 1 = 0$;
\item[$\bullet$] 
$j$ must be odd for the whole sum to be real;
\end{itemize}
and thus, after the index change $j \to 2j+1$,
\begin{align*}
\int_0^{2\pi}\cos x \, ( A \cos x & + B \sin x )^{2k+1} dx  \\
&= \frac{2 \pi A}{2^{2(k+1)}} \sum_{j=0}^{k} \sum_{h=0}^{k+1}  (-1)^{j+h+1} {2k+1 \choose 2j+1} {2j+2 \choose h} {2k-2j \choose k+1-h} A^{2j} B^{2(k-j)}\,.
\end{align*}
To continue we need the following technical lemma.

\begin{lemma}\label{lmm:sum}
Let $m,n \in \mathbb{N}$, with $m > n$. Then, 
\[
\sum_{\ell=0}^n  {n \choose \ell} \frac{(-1)^{\ell+1}}{(m - \ell)! \, (m - n + \ell)!} = 
\begin{cases}
\frac{(-1)^{\frac{n}{2}+1} \, n (n-1) \cdots \left( \frac{n}{2}+1 \right)}{m! \, \left( m - \frac{n}{2} \right)!}& \mbox{ for $n$ even} \\
&\\
0 & \mbox{ for $n$ odd}. 
\end{cases}
\]
\end{lemma}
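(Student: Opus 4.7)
\textbf{Proof plan for Lemma~\ref{lmm:sum}.} The plan is to reduce the identity to a single coefficient extraction and then split into two cases by the parity of $n$. Writing $\binom{2m-n}{m-\ell}=(2m-n)!/[(m-\ell)!(m-n+\ell)!]$, the left-hand side becomes
\[
-\frac{1}{(2m-n)!}\sum_{\ell=0}^{n}(-1)^{\ell}\binom{n}{\ell}\binom{2m-n}{m-\ell}
=-\frac{1}{(2m-n)!}\,[x^{m}]\,P(x),\qquad P(x):=(1-x)^{n}(1+x)^{2m-n},
\]
by a direct product-of-binomials expansion. So it suffices to compute $[x^{m}]P(x)$.

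For the odd case, I would observe that $P$ has degree $2m$ and satisfies $x^{2m}P(1/x)=(x-1)^{n}(x+1)^{2m-n}=(-1)^{n}P(x)$, i.e.\ $P$ is anti-palindromic with sign $(-1)^{n}$; comparing coefficients at $x^{m}$ yields $[x^{m}]P=(-1)^{n}[x^{m}]P$, which forces $[x^{m}]P=0$ when $n$ is odd. This gives the second case of the lemma immediately.

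For the even case $n=2k$, my plan is to use the Fourier/contour representation $[x^{m}]P(x)=\frac{1}{2\pi}\int_{0}^{2\pi}P(e^{i\theta})e^{-im\theta}\,d\theta$ and then the half-angle identities $1+e^{i\theta}=2e^{i\theta/2}\cos(\theta/2)$ and $1-e^{i\theta}=-2ie^{i\theta/2}\sin(\theta/2)$. Multiplying these factors gives
\[
P(e^{i\theta})=2^{2m}(-i)^{n}\,e^{im\theta}\cos^{2m-n}(\theta/2)\sin^{n}(\theta/2),
\]
so that the $e^{im\theta}$ cancels the kernel and, after the substitution $\phi=\theta/2$ and a symmetry reduction about $\phi=\pi/2$ (valid because $2m-n$ is even), the coefficient collapses to a Beta integral: $[x^{m}]P=\frac{(-1)^{k}2^{2m}}{\pi}B\!\left(m-k+\tfrac{1}{2},k+\tfrac{1}{2}\right)$. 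Inserting the half-integer Gamma values $\Gamma(k+\tfrac{1}{2})=(2k)!\sqrt{\pi}/(4^{k}k!)$ and $\Gamma(m-k+\tfrac{1}{2})=(2m-2k)!\sqrt{\pi}/(4^{m-k}(m-k)!)$ yields $[x^{m}]P=(-1)^{k}(2m-2k)!(2k)!/[k!(m-k)!m!]$; dividing by $-(2m-n)!=-(2m-2k)!$ and rewriting $(2k)!/k!=n(n-1)\cdots(n/2+1)$ reproduces the claimed formula.

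The step I expect to be most delicate is not conceptual but purely bookkeeping: aligning the sign $(-i)^{n}=(-1)^{k}$, the factor $2^{2m}$, the powers of $\pi$ from the two half-integer Gammas, and the $1/(2m-n)!$ so that everything telescopes to $(-1)^{n/2+1}(n!/(n/2)!)/[m!(m-n/2)!]$. The only non-obvious analytic input is the symmetry argument needed to reduce $\int_{0}^{\pi}\cos^{2m-n}\!\phi\,\sin^{n}\!\phi\,d\phi$ to twice the standard Beta integral on $[0,\pi/2]$, which relies crucially on $2m-n$ being even, i.e.\ on the evenness of $n$ we are already in.
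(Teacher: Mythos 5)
Your argument is correct, but it takes a genuinely different route from the paper's. After the step you both share implicitly --- recognizing the sum as a coefficient in a product of binomial powers --- the paper pairs the two factorials symmetrically: it reads the sum as an exponential-generating-function convolution of $a_\ell=(-1)^{\ell+1}/(m-\ell)!$ and $b_\ell=1/(m-\ell)!$, whose EGFs are $-(1-z)^m/m!$ and $(1+z)^m/m!$, so everything reduces to reading off coefficients of $-(1-z^2)^m/(m!)^2$; since that series contains only even powers, the odd and even cases come out of one and the same line, with no case split and no integrals. You instead normalize by $(2m-n)!$ and extract $[x^m](1-x)^n(1+x)^{2m-n}$, then settle the odd case by the anti-palindromy $x^{2m}P(1/x)=(-1)^nP(x)$ and the even case by the contour representation plus a Beta/Gamma evaluation; I checked your bookkeeping --- the sign $(-i)^{2k}=(-1)^k$, the factor $2^{2m}$, the reduction of $\int_0^{\pi}$ to $2\int_0^{\pi/2}$ (legitimate precisely because $2m-n$ is even), and the half-integer Gamma values --- and it does telescope to the stated value $(-1)^{n/2+1}\,n!\,/\bigl[(n/2)!\,m!\,(m-n/2)!\bigr]$, while your reflection argument correctly forces the odd-$n$ coefficient to vanish. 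What your route buys is that each ingredient is elementary and classical (coefficient symmetry, Wallis-type integrals); what the paper's route buys is brevity and uniformity, since the factorization $(1-z)^m(1+z)^m=(1-z^2)^m$ handles both parities at once and avoids any analytic machinery. Either proof is acceptable.
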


\begin{proof}
We use generating functions. Consider
\[
A(z) := \sum_{i=0}^{+\infty} a_i \frac{z^i}{i!} \qquad \mbox{ and } \qquad B(z) := \sum_{i=0}^{+\infty} b_i  \frac{z^i}{i!},
\]
then formally we have
\begin{equation}\label{prod:gen:fcts:formal}
A(z)B(z) = \sum_{n=0}^{+\infty} \left( \sum_{\ell=0}^n {n \choose \ell} a_{\ell} \, b_{n-\ell} \right) \frac{z^n}{n!} = \sum_{n=0}^{+\infty} c_n \frac{z^n}{n!} \qquad \mbox{ with } \qquad c_n :=  \sum_{\ell=0}^n {n \choose \ell} a_{\ell} \, b_{n-\ell}.
\end{equation}
We set
\[ 
a_i := \frac{(-1)^{i+1}}{(m-i)!} \qquad \mbox{ and } \qquad b_i := \frac{1}{(m-i)!}
\]
and we determine the generating functions as
\[
A(z) = \sum_{i=0}^{+\infty} \frac{(-1)^{i+1}}{(m-i)!} \frac{z^i}{i!} = - \frac{1}{m!} \sum_{i=0}^{+\infty} {m \choose i} (-z)^i = - \frac{(1-z)^m}{m!}
\]
and
\[
B(z) = \sum_{i=0}^{+\infty} \frac{1}{(m-i)!} \frac{z^i}{i!} =  \frac{1}{m!} \sum_{i=0}^{+\infty} {m \choose i} z^i =  \frac{(1+z)^m}{m!} \,.
\]
So for the product we obtain
\[
A(z)B(z) = - \frac{\left( 1-z^2 \right)^m}{(m!)^2} = - \frac{1}{(m!)^2} \sum_{n=0}^{+\infty} {m \choose n} (-1)^n \, z^{2n} \,,
\]
which is a power series comprised of even powers only and therefore can be rewritten as
\begin{equation}\label{prod:gen:fcts}
A(z)B(z) =  \sum_{n \in 2\mathbb{N}} \frac{n!}{(m!)^2} {m \choose \frac{n}{2}} (-1)^{\frac{n}{2}+1} \,\frac{z^n}{n!} \,.
\end{equation}
By comparing equations \eqref{prod:gen:fcts:formal} and \eqref{prod:gen:fcts} and equating the coefficients corresponding to powers of the same order we get the conclusion.\\
\end{proof}
By Lemma~\ref{lmm:sum}, setting $n=2j+2$ and $m=k+1$, we obtain
\[
{k \choose j}^{-1} \left[  \sum_{h=0}^{2j+2} (-1)^{h+1} {2j+2 \choose h} {2k-2j \choose k+1-h} \right] = \frac{2 (-1)^j (2j+1)! (2k-2j)!}{(k+1)! \, k!}
\]
that in turn implies
\begin{equation} \label{series:2}
\int_0^{2\pi}\cos x \, ( A \cos x + B \sin x )^{2k+1} dx = \frac{2 \pi A  (2k+1)!}{2^{2k+1} (k+1)! \, k!} \sum_{j=0}^{k} {k \choose j} A^{2j} B^{2(k-j)}.
\end{equation}
Plugging \eqref{series:2} into \eqref{series:1} yields
\[
Z(+1) \int_0^{2\pi} \cos x \, q (x,+1) dx = 2 \pi A \sum_{k=0}^{+\infty} \frac{\left( A^2 + B^2 \right)^{k}}{ (k+1)! \, k!} \,.
\]
The final formula \eqref{conv:Bessel} follows from the series representation 
\[
I_v (y) = \sum_{k=0}^{+\infty} \frac{\left( \frac{y}{2} \right)^{v+2k}}{(v+k)! \, k!} 
\]
of the first kind modified Bessel function of order~$v$.

\subsection{A technical lemma on Bessel functions}\label{app:Bessel}

We state and prove a technical lemma that is useful in the proofs of Proposition \ref{prop:r=0} and Proposition \ref{integralr}. As usual, let us denote by $I_v(\cdot)$ a first kind modified Bessel function of order $v$. Then,

\begin{lemma}\label{tech:lemma}
The function $g$ defined as 
\[
g(z) := \frac{I_1(2\sqrt{z})}{\sqrt{z} \, I_0(2\sqrt{z})}
\]
is strictly decreasing on $]0,+\infty[$.
\end{lemma}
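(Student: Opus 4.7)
My plan is to reduce the monotonicity claim, via a substitution, to a one-variable statement about modified Bessel functions and then derive it directly from the Tur\'an-type inequality \eqref{ineq:Bessel} that the paper already invokes from \cite{JoBi91}.

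First, I would perform the change of variable $u = 2\sqrt{z}$, which defines a strictly increasing bijection from $(0, +\infty)$ onto itself. Since
\[
g(z) \;=\; \frac{2\, I_1(u)}{u\, I_0(u)} \;=:\; h(u),
\]
the claim that $g$ is strictly decreasing in $z$ is equivalent to the claim that $h$ is strictly decreasing in $u$ on $(0,+\infty)$, which I find more convenient to handle.

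Next, I would differentiate $h$ explicitly using the standard identities $I_0'(u) = I_1(u)$ and $I_1'(u) = I_0(u) - I_1(u)/u$ (both consequences of the three-term recurrences $2 I_v'(u) = I_{v-1}(u) + I_{v+1}(u)$ and $I_{v-1}(u) - I_{v+1}(u) = (2v/u)\, I_v(u)$). A short quotient-rule calculation yields
\[
h'(u) \;=\; \frac{2\bigl\{\, u \bigl[\, I_0^{\,2}(u) - I_1^{\,2}(u)\,\bigr] \;-\; 2\, I_0(u)\, I_1(u)\,\bigr\}}{u^2\, I_0^{\,2}(u)}.
\]
Because the denominator is strictly positive on $(0, +\infty)$, the strict decrease of $h$ reduces to the single inequality
\[
u \bigl[\, I_0^{\,2}(u) - I_1^{\,2}(u)\,\bigr] \;-\; 2\, I_0(u)\, I_1(u) \;<\; 0 \qquad \text{for all } u > 0.
\]

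Finally, I would apply the Tur\'an-type inequality \eqref{ineq:Bessel} at $v = n = 1$, which states $I_1^{\,2}(u) > I_0(u)\, I_2(u)$ for $u > 0$, in combination with the recurrence $I_2(u) = I_0(u) - (2/u)\, I_1(u)$. Substituting the latter into the former gives $I_0^{\,2}(u) - (2/u)\, I_0(u)\, I_1(u) < I_1^{\,2}(u)$; multiplying through by $u > 0$ and rearranging is exactly the desired inequality, concluding the argument.

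The only delicate step is the derivative computation, where an off-by-one error or sign slip would derail everything, but there is no conceptual obstacle: once the right Bessel identities are in place, the claim drops out in two lines from the Bessel inequality already cited in the paper.
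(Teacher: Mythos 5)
Your proof is correct: the substitution $u=2\sqrt{z}$ is harmless, the derivative formula for $h$ checks out, and the final inequality $u\bigl[I_0^2(u)-I_1^2(u)\bigr]-2I_0(u)I_1(u)<0$ follows exactly as you say from \eqref{ineq:Bessel} with $v=n=1$ together with $I_2(u)=I_0(u)-\tfrac{2}{u}I_1(u)$. This is essentially the paper's own argument — differentiate, simplify via the Bessel recurrences, and conclude with the Tur\'an-type inequality from \cite{JoBi91} — differing only in the cosmetic change of variable.
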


\begin{proof}
The proof relies on the following properties: the recurrent relation~\cite[Sect. 3.71]{Wat44}
\begin{equation}\label{rec:rel:Bessel}
y I_v'(y) - v I_v(y) = y I_{v+1}(y)
\end{equation}
and the inequality \eqref{ineq:Bessel}. We obtain
\[
g'(z) = \frac{I_0(2\sqrt{z}) \big[ 2\sqrt{z} \, I_1'(2\sqrt{z}) - I_1(2\sqrt{z}) \big] - 2\sqrt{z} \, I_0'(2\sqrt{z}) I_1(2\sqrt{z})}{2 z \sqrt{z} \, I_0^2(2\sqrt{z})} \, \stackrel{\mbox{\tiny \eqref{rec:rel:Bessel}}}{=} \, \frac{I_0(2\sqrt{z}) I_2(2\sqrt{z}) - I_1^2(2\sqrt{z})}{z I_0^2(2\sqrt{z})} \, \stackrel{\mbox{\tiny \eqref{ineq:Bessel}}}{<} \, 0 
\]
and the conclusion follows.
\end{proof}

\section*{Acknowledgements}
The authors express their gratitude to Aernout C.D. van Enter  who has been a great source of inspiration as witnessed also by the content of this paper. Moreover they warmly thank Paolo Dai Pra, Marco Formentin, Frank Redig and Cristian Spitoni for interesting suggestions and discussions.\\
Research of FC was supported by the Italian research funding agency (MIUR) through FIRB research grant RBFR10N90W, by the French foundation \emph{Fondation Sciences Math{\'e}matiques de Paris} and by the Dutch stochastics cluster \emph{STAR} (Stochastics -- Theoretical and Applied Research).

\bibliographystyle{spmpsci}      

\end{document}